\documentclass[pdflatex,sn-mathphys-num]{sn-jnl}


\usepackage{graphicx}%
\usepackage{multirow}%
\usepackage{amsmath,amssymb,amsfonts}%
\usepackage{amsthm}%
\usepackage{mathrsfs}%
\usepackage[title]{appendix}%
\usepackage{xcolor}%
\usepackage{textcomp}%
\usepackage{manyfoot}%
\usepackage{booktabs}%
\usepackage{algorithm}%
\usepackage{algorithmicx}%
\usepackage{algpseudocode}%
\usepackage{listings}%
\newcommand{\R}{\mathbb{R}}

\usepackage{subfig}
\usepackage{epstopdf}
\usepackage{soul}





\DeclareMathOperator\supp{supp}
\newtheorem{theorem}{Theorem}[section]

\newtheorem{proposition}{Proposition}[section]
\newtheorem{corollary}{Corollary}[section]
\newtheorem{lemma}{Lemma}[section]
\theoremstyle{plain}

\theoremstyle{definition}

\newtheorem{remark}{Remark}[section]

\raggedbottom

\begin{document}

\title[Article Title]{\centering HV Metric For Time-Domain Full Waveform Inversion\\
\textit{\small (dedicated to Bj\"orn Engquist on the occasion of his 80th birthday)}}


\author*[1]{\fnm{Matej} \sur{Neumann}}\email{mn528@cornell.edu}

\author[1]{\fnm{Yunan} \sur{Yang}}\email{yunan.yang@cornell.edu}

\affil*[1]{Department of Mathematics, \orgname{Cornell University,}, \orgaddress{\street{212 Garden Ave}, \city{Ithaca}, \postcode{14850}, \state{New York}, \country{USA}}}

\abstract{Full‐waveform inversion (FWI) is a powerful technique for reconstructing high‐resolution material parameters from seismic or ultrasound data. The conventional least‐squares ($L^{2}$) misfit suffers from pronounced non-convexity that leads to \emph{cycle skipping}. Optimal-transport misfits, such as the Wasserstein distance, alleviate this issue; however, their use requires artificially converting the wavefields into probability measures, a preprocessing step that can modify critical amplitude and phase information of time-dependent wave data. We propose the \emph{HV metric}, a transport‐based distance that acts naturally on signed signals, as an alternative metric for the $L^{2}$ and Wasserstein objectives in time-domain FWI. After reviewing the metric's definition and its relationship to optimal transport, we derive closed‐form expressions for the Fr\'echet derivative and Hessian of the map $f \mapsto d_{\text{HV}}^2(f,g)$, enabling efficient adjoint‐state implementations. A spectral analysis of the Hessian shows that, by tuning the hyperparameters $(\kappa,\lambda,\varepsilon)$, the HV misfit seamlessly interpolates between $L^{2}$, $H^{-1}$, and $H^{-2}$ norms, offering a tunable trade-off between the local point-wise matching and the global transport-based matching.   Synthetic experiments on the Marmousi and BP benchmark models demonstrate that the HV metric-based objective function yields faster convergence and superior tolerance to poor initial models compared to both $L^{2}$ and Wasserstein misfits. These results demonstrate the HV metric as a robust, geometry‐preserving alternative for large‐scale waveform inversion.
}

\keywords{full waveform inversion, HV metric, optimal transport, convexity}


\pacs[MSC Classification]{46E36, 49N45, 65K10, 86A15}

\maketitle
\section{Introduction}

Inverse problems aim to recover a model parameter $m$ such that the synthetic data produced by a forward operator $F$ matches the observed data $g$:
\begin{equation}\label{eq:fwd}
    F(m) = g.
\end{equation}
Due to the nonlinearity and lack of direct invertibility of $F$, solving~\eqref{eq:fwd} typically requires computational methods, which cast the problem as a variational optimization:
\begin{equation}\label{eq:mmin}
    m^* = \arg\min_{m} \, \mathcal{D}\bigl(F(m), g\bigr),
\end{equation}
where $\mathcal{D}$ is a user-defined misfit functional measuring the discrepancy between the synthetic and observed data. Among many choices, the squared $L^2$ norm is widely used for its simplicity and favorable analytic properties, leading to the classical least-squares or mean squared error (MSE) formulation.

In this paper, we focus on solving a specific inverse problem: time-domain full-waveform inversion (FWI). Originally introduced in~\cite{lailly1983seismic,tarantola1984inversion}, FWI is a computational imaging technique that has demonstrated significant effectiveness in retrieving high-resolution characterizations of material properties, such as wave speed or density, in geophysical (seismic) and medical (ultrasound) applications. Unlike traditional imaging methods, FWI leverages the entire recorded wavefield, incorporating both amplitude and phase information, to iteratively refine a model of the subsurface or medium. By minimizing the discrepancy between simulated and observed wave data, FWI reconstructs parameter distributions with remarkable detail.

Despite its versatility, FWI has long faced technical challenges stemming from its strong non-convexity when formulated with the standard $L^2$ misfit functional~\cite{virieux2009overview}. Non-convexity often leads gradient-based optimization algorithms to become trapped in spurious local minima, a phenomenon known as ``cycle skipping.'' To address this issue, researchers have explored alternative discrepancy measures that better capture the geometry of waveform data. Among these, the Wasserstein distance from optimal transport theory has emerged as a promising candidate for improving FWI formulations~\cite{santambrogio2015optimal,villani2021topics,EFWass,engquist2016optimal,yang2018analysis,engquist2020optimal}. By reformulating the objective function with a transport-based metric, these methods mitigate non-convexity, offering new, scalable algorithms for large-scale seismic applications~\cite{yang2018application}.

However, while optimal transport theory offers compelling advantages, its application to seismic inversion is complicated by fundamental differences in signal properties. Optimal transport operates on probability distributions—non-negative measures that integrate to one—while seismic signals are often signed-valued and oscillatory in nature. This mismatch introduces theoretical constraints that limit the full potential of optimal transport-based methods and necessitate modifications or extensions to the framework~\cite{qiu2017full,Survey2,engquist2020optimal}. Addressing this mismatch remains a key challenge in enabling transport-based techniques to fully realize their potential in seismic applications.

To overcome limitations in classical optimal transport-based approaches, the HV metric was introduced as a transport-based distance specifically designed to handle signed signals~\cite{han2024hv}. Conceptually, the HV metric combines horizontal deformations (captured by the Wasserstein metric) with vertical deformations (quantified by the $L^2$ norm), using a geometric framework first proposed by Miller et al.~\cite{miller2001group}. Building on this foundation, Han et al.~\cite{han2024hv} significantly advanced the HV metric by analyzing the regularity and stability of minimizing geodesics and developing an efficient algorithm based on fixed-point iterations. Validated through numerical experiments, the HV metric facilitates a PDE-constrained optimization approach that jointly penalizes spatial and amplitude mismatches while preserving waveform structure~\cite{han2024hv}. Furthermore, the HV metric has demonstrated its strength as a robust data misfit functional in frequency-domain FWI, successfully applied in both seismic inversion and ultrasound imaging~\cite{zeng2025robust}.

In this work, we focus on the use of the HV metric as an objective functional in time-domain FWI and further investigate its analytical properties, including the formulation of the Fr\'echet derivative and local convexity, both of which are essential for the practical application of the HV metric in inversion. Given the wave nature of the underlying physics and the HV metric's connection to the Wasserstein metric from optimal transport (see Table~\ref{tab:comparison}), we argue that the HV metric is better suited for waveform inversion tasks in the time domain compared to the frequency domain~\cite{zeng2025robust}.

The rest of the paper is organized as follows. Section~\ref{background} reviews the definition of the HV metric and provides a summary of the key elements of FWI. In Section~\ref{fgrad}, we derive closed-form expressions for the Fr\'echet derivative and Hessian of the map $f \mapsto d_{\text{HV}}^2(f, g)$, enabling efficient adjoint-state implementations for gradient-based FWI solvers. Section~\ref{hessprop} examines the second-order properties of the HV metric and demonstrates that, for different choices of the hyperparameters $\kappa$, $\lambda$, and $\varepsilon$, the HV misfit interpolates between the $L^2$, $H^{-1}$, and $H^{-2}$ norms. Section~\ref{numex} presents numerical experiments that validate the theoretical framework and demonstrate the robustness of the HV metric on synthetic FWI benchmarks. Finally, Section~\ref{conclusion} offers concluding remarks and outlines promising directions for future research.

\section{Background}\label{background}

This section summarizes the essential ingredients of full-waveform inversion and the HV metric.

\subsection{Full-Waveform Inversion}\label{sec:fwi}

Full-waveform inversion (FWI) is a nonlinear inverse technique for estimating subsurface parameters from recorded wavefields~\cite{lailly1983seismic,tarantola1984inversion}.  
In the numerical experiments of Section~\ref{numex}, we adopt the acoustic wave equation as the forward model
\begin{equation}\label{forwardwave}
    \begin{cases}
        m(x)\,\partial_{t}^{2}u(x,t)-\Delta u(x,t)=s(x,t),\\[2pt]
        u(x,0)=0,\quad\partial_{t}u(x,0)=0,
    \end{cases}
\end{equation}
where $u(x,t)$ is the pressure field, $s(x,t)$ is the source term, and $m(x)=1/c^{2}(x)$ with $c(x)$ the wave speed.  
The forward operator $F\colon m\mapsto u$ for this inverse problem is nonlinear, so we recast the inverse problem as a PDE-constrained optimization:
\begin{equation}\label{eq:fwi_obj}
    \min_{m}\,J(m),\quad \text{with}\quad 
    J(m)=\sum_{r} \mathcal{D}\left(F(m)(x_{r},t), g(x_{r},t) \right),
\end{equation}
where $\mathcal{D}(f,g)$ quantifies the discrepancy between signals $f$ and $g$ with $g$ denoting the measured data and the sum runs over receiver locations $x_{r}$.  For example, if the $L^2$ norm is used, $J(m)=\frac12\sum_{r}\int_{0}^{T_{\max}}\! |F(m)(x_{r},t)-g(x_{r},t)|^{2}\,dt$.

Because $m(x)$ is discretized on a fine mesh, finite-difference gradients are impractical.  
Instead we employ the adjoint-state method~\cite{chavent2010nonlinear,plessix2006review}.  
Solving \eqref{forwardwave} yields the forward wavefield $u$; the adjoint field $w$ is obtained from
\begin{equation}\label{adjointwave}
    \begin{cases}
        m(x)\,\partial_{t}^{2}w(x,t)-\Delta w(x,t)=-R^{*}\frac{\delta \mathcal{D}}{\delta f},\\[2pt]
        w(x,T_{\max})=0,\quad\partial_{t}w(x,T_{\max})=0,
    \end{cases}
\end{equation}
where $R$ restricts a field to the receiver set and $R^{*}$ is its adjoint. The adjoint equation~\eqref{adjointwave} is solved backward in time, also referred to as  back-propagation.  The gradient of the misfit with respect to $m$ is then
\begin{equation}\label{gradform}
    \frac{\delta J(m)}{\delta m}(x)=-\int_{0}^{T_{\max}}\partial_{t}^{2}u(x,t)\,w(x,t)\,dt.
\end{equation}
The choice of misfit functional $\mathcal{D}$ affects only the forcing term  $-R^{*}\frac{\delta \mathcal{D}}{\delta f}$ in~\eqref{adjointwave}. Throughout this work, we update the parameter $m$ with the L-BFGS-B algorithm~\cite{LBFGS}, using Equation~\eqref{gradform} to evaluate the gradient direction.

\subsection{The HV Metric}
Miller and Younes introduced in~\cite{miller2001group} a Riemannian metric on the space of square-integrable signals that simultaneously penalizes \emph{vertical} (amplitude) and \emph{horizontal} (transport) deformations.  This metric resolves the main shortcomings of the plain $L^{2}$ norm, which is blind to horizontal shifts, while avoiding the restrictions of the Wasserstein distance, which requires non-negative signals of equal total mass.  The framework was refined in~\cite{trouve2005local} and, more recently, in~\cite{han2024hv}, where the authors carefully studied the distance in one dimension, analyzed the associated geodesic equations, and developed an efficient numerical scheme for its evaluation.

Let $\rho_0,\rho_1$ be two $L^2(\R)$ functions. The square of the HV metric between $\rho_0$ and $\rho_1$, denoted as $d_{\text{HV}}^2(\rho_0,\rho_1),$ is defined as 
\begin{equation}
\label{action}
    d_{\text{HV}}^2(\rho_0,\rho_1)=\inf_{(f,v,z)\in \mathcal{A}(\rho_0,\rho_1)}\frac{1}{2}\int_0^1\int_0^1\kappa v^2+\lambda v_x^2+\varepsilon v_{xx}^2+z^2dxdt,
\end{equation}
 where $\kappa,\varepsilon>0,\lambda\geq0$ are modeling parameters and $\mathcal{A}(\rho_0,\rho_1)$ is the set of all admissible paths satisfying 
\begin{align*}
\mathcal{A}(\rho_0,\rho_1) := \big\{(f,v,z) & \mid f\in L^2((0,1),L^2(0,1))\cap C([0,1],H^{-1}(0,1)), \\
& v\in L^{2}((0,1),H^{2}(0,1)\cap H_{0}^{1}(0,1)),\ z\in L^2((0,1),L^2(0,1)), \\
& \boxed{ \partial_t f + \partial_x f\cdot v = z \text{ weakly}\,, f(\cdot,0)=\rho_0, f(\cdot,1)=\rho_1 } \big \}.
\end{align*}

By Proposition in \cite[Prop.~3.1]{han2024hv}, it follows that for any $\rho_0,\rho_1\in H^1(0,1)$, the solution to the system of Euler--Lagrange equations \begin{align}
\varepsilon v_{xxxx} -\lambda v_{xx} + \kappa v + z f_x & = 0 \text{ weakly on } (0,1)^2 \label{EL1}\\
 v = 0   \; \text{ and } \;  v_{xx} & = 0 \,\,\text{ on } \partial \Omega \times [0,1] \; \label{EL2} \\
z_t + (zv)_x & = 0  \text{ weakly on } (0,1)^2  \label{EL3}\\
 f_t + f_x  v - z & = 0 \text{ weakly on } (0,1)^2 \label{EL4}\\
 f(\cdot,0)  =\rho_0, \; \text{ and }  \; f(\cdot,1)&=\rho_1, \label{EL5} 
\end{align}
with the condition of $(f,v,z)$ belonging to $\mathcal A(\rho_0, \rho_1)$ determines the minimizing path $(f,v,z)$ to~\eqref{action}.
The minimizers can be computed using an iterative minimization scheme~\cite{han2024hv}. The method alternates between fixing $v$ while finding the optimal $(f,z)$, and fixing $f$ while
searching for the optimal $(v, z)$.

The first of the two sub-problems fixes $v$ and searches for the optimal pair $(f,z)$ by minimizing the convex objective functional under a linear constraint:
\[
\min_{f, z} \frac{1}{2} \int_0^1 \int_0^1 z^2 \, dx \, dt,\quad  \text{s.t.}\quad  (f, v, z) \in \mathcal{A}(\rho_0,\rho_1).
\]
The first-order optimality conditions are given by the Euler--Lagrange equations \eqref{EL3}, \eqref{EL4}, and \eqref{EL5}:
\begin{align*}
    &z_t + (zv)_x = 0,\quad f_t + f_x v - z = 0,\quad f(\cdot, 0) = \rho_0, \, f(\cdot, 1) = \rho_1.
\end{align*}
The second sub-problem fixes the function $f$ and computes the optimal $(v,z)$ by minimizing the action \eqref{action} under constraint $\mathcal{A}(\rho_0,\rho_1)$. As before, this reduces to a quadratic optimization problem under linear constraints:
\[
\min_{v,z} \frac{1}{2} \int_0^1 \int_0^1 \kappa v^2 + \lambda v_x^2 + \varepsilon v_{xx}^2 + z^2 \, \mathrm{d}x \mathrm{d}t, \quad\text{s.t. } (f,v,z) \in \mathcal{A}(\rho_0,\rho_1).
\]
Using \eqref{EL1} and the constraint \( z = f_t + vf_x \) yields the following fourth-order boundary value problem for \( v \):
\[
   \begin{cases}
\varepsilon v_{xxxx} - \lambda v_{xx} + (\kappa + |f_x|^2)v = -f_t f_x \quad &\text{on } (0, 1)^2,\\
v = 0 \quad &\text{on } \{0, 1\} \times (0, 1),\\
v_{xx} = 0 \quad & \text{on } \{0, 1\} \times (0, 1).
   \end{cases}
\]
We repeat the minimization scheme for a predetermined number of iterations $N$ or until we reach a predefined tolerance $\delta>0$ to obtain the optimal $(f,v,z)$ tuple for~\eqref{action}.

We remark that the HV metric can also be viewed as an extension of optimal transport metrics. Table~\ref{tab:comparison} provides a comparison between the HV metric~\cite{han2024hv}, the dynamic formulation of the 2-Wasserstein ($W_2$) metric from optimal transport (OT)~\cite{villani2021topics}, and unbalanced optimal transport (UOT)~\cite{chizat2018scaling}. All these metrics can be interpreted as the optimal cost computed over a set of admissible paths; however, their distinctions lie in (i) the types of paths considered and (ii) the specific cost functionals used. 

In particular, the HV metric employs paths that satisfy the \emph{adjoint} of the continuity equation $\partial_t f = -\nabla\!\cdot (f\,v)$, augmented by a source term $z$.  In classical OT, by contrast, admissible paths obey the continuity equation itself.  This adjoint formulation $\partial_t f = -\nabla f \,\cdot \,v + z$ permits local creation and annihilation of mass and therefore enables the HV metric to compare \emph{signed} signals, whereas the standard OT and UOT setting is limited to non-negative functions.

\begin{table}[ht]
\centering
\renewcommand{\arraystretch}{1.4}
\begin{tabular}{@{}l@{\hspace{1em}}p{0.38\linewidth}@{\hspace{1em}}p{0.46\linewidth}@{}}
\toprule
\textbf{Metric} & \textbf{Admissible Path} & \textbf{Action functional} \\ \midrule
HV &
$\partial_t f = -\nabla f \cdot \,v + z$ &
$\displaystyle
\int_{0}^{1}
        \int_{0}^{1}
          \left(\kappa v^{2} + \lambda v_x^{2} + \varepsilon v_{xx}^{2} + z^{2}\right)\,dx
     \,dt$ \\[10pt]

OT &
$\partial_t f = -\nabla\!\cdot (f\,v)$ &
$\displaystyle
 \int_{0}^{1}\!\int_{0}^{1} v^{2}\,f\,dx\,dt$ \\[10pt]

UOT &
$\partial_t f = -\nabla\!\cdot (f\,v) + z\,f$ &
$\displaystyle
\int_{0}^{1}\!\int_{0}^{1} \bigl(v^{2}+z^{2}\bigr)\,f\,dx\,dt$ \\ \bottomrule
\end{tabular}
\caption{Dynamic formulations of the HV metric, quadratic-cost optimal transport (OT), and unbalanced optimal transport (UOT). While they share a similar variational structure, they differ in the types of paths considered and the specific cost functionals employed.}
\label{tab:comparison}
\end{table}

\section{The Fr\'echet Derivative of the HV Metric}\label{fgrad}
To employ the HV metric as the misfit in FWI, i.e., $\mathcal{D}(f,g) =d_{\text{HV}}^2(f,g)$ in~\eqref{eq:fwi_obj}, we require its Fr\'echet derivative with respect to the simulated data $f=F(m)$:
\[
    \frac{\partial d_{\text{HV}}^2(f,g)}{\partial f}\,.
\]
Once this expression is available, it enters the adjoint equation~\eqref{adjointwave} as the back-propagated source term, yielding the adjoint wavefield $w$.  Substituting $w$ into the gradient formula~\eqref{gradform} then provides the update direction for the model parameter $m$ in any gradient-based optimization scheme.

In the following, we present two different approaches to computing $\frac{\partial d_{\text{HV}}^2(f,g)}{\partial f}$. The first derivation is more intuitive and geometrical, while the other one uses perturbation theory.

\begin{lemma}
\label{lemma-grad}
For fixed functions $\rho_0,\rho_1\in H^1(0,1)$, define for every $s$ the parameterized function $g(s,x)=g_s(x)\in H^{1}(0,1)$ satisfying $g_0=\rho_0.$ If the tuple $(f,v,z)$  denotes the minimizing geodesic between $\rho_0$ and $\rho_1,$ then the derivative of $L(s) := d_{\text{HV}}^2(g_s,\rho_1)$ with respect to $s$ at $s = 0$ can be expressed as 
$$
L'(0) = \lim_{s\to 0}\frac{d_{\text{HV}}^2(g_s,\rho_1)-d_{\text{HV}}^2(g_0,\rho_1)}{s}=-\int_0^1 z(x,0)\, \partial_s g_s|_{s=0}(x) d x.
$$
Based on the arbitrariness of $g_s$, we further obtain the  Fr\'echet derivative of the functional $\Phi: \rho_0\mapsto d_{\text{HV}}^2(\rho_0, \rho_1)$ given by 
\begin{equation}
  \frac{\partial \Phi}{\partial \rho_0}  =   \frac{\partial d_{\text{HV}}^2(\rho_0,\rho_1)}{\partial \rho_0} = - z(x,0)\,.
\end{equation}
\end{lemma}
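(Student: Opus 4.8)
\emph{Proof plan.} Throughout, write $\mathcal S[f,v,z]:=\tfrac{1}{2}\int_0^1\!\int_0^1\!\bigl(\kappa v^2+\lambda v_x^2+\varepsilon v_{xx}^2+z^2\bigr)\,dx\,dt$, so that $d_{\text{HV}}^2(\rho_0,\rho_1)=\inf_{\mathcal A(\rho_0,\rho_1)}\mathcal S$ with $(f,v,z)$ a minimizer. The argument I have in mind is the first-variation (envelope) computation for a path-action functional, specialized to the transport constraint $\partial_t f+v\,\partial_x f=z$; geometrically it is the Riemannian identity that the gradient of the squared distance at an endpoint equals minus the initial momentum of the connecting geodesic, the momentum conjugate to $f$ here being exactly $z$, as one reads off from the Hamiltonian structure of \eqref{EL1}--\eqref{EL5}.

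\textbf{Step 1 (upper bound via a competitor path).} For each small $s$ I would construct an admissible path for $\mathcal A(g_s,\rho_1)$ by leaving $v$ untouched and tilting $f$: set $\tilde f_s:=f+s\,\phi_s$ with $\phi_s(x,t):=(1-t)\,(g_s(x)-\rho_0(x))/s$, which satisfies $\tilde f_s(\cdot,0)=g_s$, $\tilde f_s(\cdot,1)=\rho_1$. The constraint then forces $\tilde z_s:=\partial_t\tilde f_s+v\,\partial_x\tilde f_s=z+s\,(\partial_t\phi_s+v\,\partial_x\phi_s)$, and since $h:=\partial_s g_s|_{s=0}\in H^1(0,1)$ and $v\in H^2\cap H_0^1$ one checks $(\tilde f_s,v,\tilde z_s)\in\mathcal A(g_s,\rho_1)$ (this regularity bookkeeping is routine). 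Only the $z^2$ term of $\mathcal S$ is affected, so $\mathcal S[\tilde f_s,v,\tilde z_s]=\mathcal S[f,v,z]+s\int_0^1\!\int_0^1 z\,(\partial_t\phi_s+v\,\partial_x\phi_s)\,dx\,dt+O(s^2)$.

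\textbf{Step 2 (collapse to a boundary term).} Integrating by parts in $t$ (producing the endpoints $t=0,1$) and in $x$ (whose boundary contribution vanishes because $v=0$ on $\partial\Omega\times[0,1]$), then using the Euler--Lagrange equation $z_t+(zv)_x=0$ from \eqref{EL3} to eliminate the remaining interior integral together with $\phi_s(\cdot,1)=0$, the first-order term reduces to $-\,s\int_0^1 z(x,0)\,\phi_s(x,0)\,dx=-\int_0^1 z(x,0)\,(g_s(x)-\rho_0(x))\,dx$. Hence $d_{\text{HV}}^2(g_s,\rho_1)\le d_{\text{HV}}^2(\rho_0,\rho_1)-s\int_0^1 z(x,0)\,h(x)\,dx+o(s)$.

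\textbf{Step 3 (matching lower bound and conclusion).} Repeating Steps 1--2 starting instead from a minimizing geodesic $(f^s,v^s,z^s)$ for $(g_s,\rho_1)$ (whose existence is \cite[Prop.~3.1]{han2024hv}) and deforming its left endpoint from $g_s$ back to $\rho_0$ gives the opposite bound $d_{\text{HV}}^2(\rho_0,\rho_1)\le d_{\text{HV}}^2(g_s,\rho_1)+s\int_0^1 z^s(x,0)\,h(x)\,dx+o(s)$. Dividing both estimates by $s$, the difference quotient $(L(s)-L(0))/s$ is trapped between $-\int_0^1 z^s(x,0)\,h\,dx+o(1)$ and $-\int_0^1 z(x,0)\,h\,dx+o(1)$; using the stability of minimizing geodesics with respect to endpoint data from \cite{han2024hv}, so that $z^s(\cdot,0)\to z(\cdot,0)$ in $L^2(0,1)$ as $s\to0$, both one-sided limits coincide and $L'(0)=-\int_0^1 z(x,0)\,h(x)\,dx$. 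Since the family $g_s$, hence the direction $h$, is arbitrary in $H^1(0,1)$, the map $h\mapsto-\langle z(\cdot,0),h\rangle$ is the (bounded, linear) G\^ateaux derivative of $\Phi$ at $\rho_0$, whose Riesz representative is $-z(\cdot,0)$; continuity of $\rho_0\mapsto z(\cdot,0)$ upgrades this to a Fr\'echet derivative, giving $\partial\Phi/\partial\rho_0=-z(x,0)$.

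\emph{Main obstacle.} The delicate ingredient is the lower bound in Step 3: it rests on well-posedness and stability of the minimizing geodesic---specifically, $L^2$-continuity of the initial momentum $z(\cdot,0)$ under perturbations of the endpoint $\rho_0$---which we take from \cite{han2024hv}. The upper bound, the two integrations by parts, the admissibility of the competitor paths, and the $O(s^2)$ remainder are all elementary once $\rho_0,\rho_1\in H^1(0,1)$ and the family $g_s$ is $H^1$-differentiable at $s=0$.
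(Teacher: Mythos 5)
Your proposal is correct in outline, but it follows a genuinely different route from either of the paper's two proofs. The paper's first proof invokes the Riemannian first-variation formula for the HV tangent-space inner product and then uses \cite[Prop.~14]{han2024hv} to replace the tangent-vector pair $(w,h)$ by the convenient representative $(\bar w,\bar h)=(0,\partial_s g_s|_{s=0})$; its second proof posits a full asymptotic expansion $(\rho^s,v^s,z^s)=(f,v,z)+s(f_1,v_1,z_1)+\cdots$ of the perturbed minimizer and expands the action, using the Euler--Lagrange system and integration by parts to collapse the $O(s)$ term to $-\int_0^1 z(x,0)\theta\,dx$. You instead run an envelope (two-sided competitor) argument: the explicit competitor $\tilde f_s=f+(1-t)(g_s-\rho_0)$ with $v$ frozen gives the upper bound, the same integrations by parts plus \eqref{EL3} collapse the first-order term to the boundary contribution at $t=0$, and the reversed construction from the geodesic of $(g_s,\rho_1)$ gives the matching lower bound. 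The computational core (integrate by parts in $t$ and $x$, kill the interior term with $z_t+(zv)_x=0$ and the $x$-boundary term with $v\in H_0^1$) coincides with the paper's second proof, but your overall logic is cleaner in one respect: you only need \emph{stability} of the initial momentum, $z^s(\cdot,0)\rightharpoonup z(\cdot,0)$ against the fixed direction $h$, rather than the differentiability of the whole geodesic in $s$ that the paper's ansatz silently assumes. The price is that your lower bound genuinely depends on that stability statement being available in \cite{han2024hv} in the form you cite; you should verify that the stability results there give (at least weak) $L^2$ convergence of $z^s(\cdot,0)$, since only the regularity of minimizing geodesics for fixed endpoints is quoted explicitly in this paper. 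With that citation pinned down, your argument is complete and, if anything, more self-contained about where the analytic difficulty lives.
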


\begin{proof}
Let $(w,h)$ denote the HV geometry tangent vectors in the tangent space at point $\rho_0$ corresponding to the Eulerian perturbation $\partial_s g_s|_{s=0}$. That is, 
\begin{equation}\label{eq:adm_tangent}
    - w\partial_x g_0 +h = \partial_s g_s|_{s=0},
\end{equation} 
and
\[
\begin{cases}
\varepsilon w_{xxxx} -\lambda w_{xx} + \kappa w   = -h\, \partial_x g_0  \quad & \text{on } (0,1), \\
w  = 0  \quad & \text{at } x = 0,1, \\
w_{xx} = 0  \quad & \text{at } x = 0,1.
\end{cases}
\]
With the special inner product for the tangent space of the HV geometry~\cite[Sec.~2.2]{han2024hv}, we can express the directional derivative evaluated at the Eulerian perturbation direction $\partial_s g_s|_{s=0}(x) $ as 
\begin{align*}
   & \int_0^1 \frac{\partial J}{\partial \rho_0}(x) \,\partial_s g_s|_{s=0}(x) d x\\
    =& \lim_{s\to 0}\frac{d_{\text{HV}}^2(g_s,\rho_1)-d_{\text{HV}}^2(g_0,\rho_1)}{s}\\
   =&
   -\int_0^1 \Big(\kappa v(x,0)w(x,0)+\lambda v_x(x,0)w_x(x,0) \\
   &\quad +\varepsilon v(x,0)_{xx}w(x,0)_{xx}+z(x,0)h(x,0) \Big) d x.
\end{align*} 
Proposition~14 in \cite{han2024hv} implies that if $(\bar w,\bar h)$ is another pair of functions satisfying~\eqref{eq:adm_tangent}, then we have
\begin{align*}
  & \int_0^1 \frac{\partial J}{\partial \rho_0} (x)\partial_s g_s|_{s=0}(x)  d x \\
  =& -\int_0^1\Big( \kappa v(x,0)\bar w(x,0)+\lambda v_x(x,0)\bar w_x(x,0) \\
   &\quad +\varepsilon v(x,0)_{xx}\bar w(x,0)_{xx}+z(x,0)\bar h(x,0) \Big)d x.
\end{align*} 
We complete the proof by choosing $\bar w\equiv0$ and $\bar h=\partial_s g_s|_{s=0}$. 
\end{proof}

The proof using perturbation analysis describes an alternate way of thinking about the derivative $\frac{\partial d_{\text{HV}}^2(\rho_0,\rho_1)}{\partial \rho_0}$. We will expand on this idea further when discussing higher-order derivatives. By perturbing $\rho_0$ by $\rho_0 + s \theta(x)$ and fixing $\rho_1$, the solution to the system of Euler--Lagrange equations described in~\cite[Sec.~3.1]{han2024hv} will also become $(\rho^s,v^s, z^s)$, rather than $(f,v,z)$. That is,
\begin{align*}
\varepsilon {v^s}_{xxxx} -\lambda  {v^s}_{xx} + \kappa  {v^s} +  {z^s}  {\rho^s}_x & = 0 \text{ weakly on } (0,1)^2 \\
 v^s = 0   \; \text{ and } \;  v^s_{xx} & = 0 \,\,\text{ on } \partial \Omega \times [0,1] \;  \\
(z^s)_t + (z^sv^s)_x & = 0  \text{ weakly on } (0,1)^2  \\
 (\rho^s)_t + (\rho^s)_x  v^s - z^s & = 0 \text{ weakly on } (0,1)^2 \\
 \rho^s(\cdot,0)  =\rho_0 + s \theta(x) \; \text{ and }  \; \rho^s(\cdot,1)&=\rho_1.
\end{align*}
We make the following ansatz 
\begin{eqnarray*}
  \rho^s &=& f + s f_1 + s^2 f_2 + o(s^2),\\
  v^s &=& v + s v_1 + s^2 v_2 + o(s^2), \\
  z^s &=& z + s z_1 + s^2 z_2 + o(s^2). 
\end{eqnarray*}

\begin{proof}[Alternate Proof of Lemma \ref{lemma-grad}]
  Substituting $v^s$ and $z^s$ into the definition of the HV metric, we get 
\begin{align*}
    &d_\text{HV}^2(\rho_0 + s\theta,\rho_1)=\frac{1}{2}\int_0^1\int_0^1 \kappa (v + s v_1 + o(s))^2+\lambda (v + s v_1 + o(s))_x^2\\ &\hspace{4cm}+\varepsilon (v + s v_1 + o(s))_{xx}^2+(z + s z_1 +o(s) )^2 dxdt\,.
\end{align*}
Expanding and grouping terms by the exponents of $s$, we get 
$$
d_\text{HV}^2(\rho_0+s\theta,\rho_1)=d_\text{HV}^2(\rho_0,\rho_1)+s\int_0^1\int_0^1 \left( vv_1+v_x(v_1)_x+v_{xx}(v_1)_{xx}+zz_1 \right) dxdt+o(s).
$$
We can simplify the integral by using integration by parts and the transport condition $z_1=\frac{d}{dt}f_1+v{f_{x}}_1+v_1{f_{x}}$:
\begin{align}
     &\int_0^1\int_0^1 \left(v_1(\varepsilon {v_{xxxx}}-\lambda {v_{xx}}+\kappa v)+z\left(\frac{d}{dt}f_1+v{f_{x}}_1+v_1{f_{x}}\right)\right)dxdt \nonumber \\=
   &\int_0^1\int_0^1 \left(v_1(\varepsilon {v_{xxxx}}-\lambda {v_{xx}}+\kappa v+z{f_{x}})+z\left(\frac{d}{dt}f_1+v{f_{x}}_1\right)\right)dxdt \nonumber \\=
     &\int_0^1\int_0^1 \left(z\frac{d}{dt}f_1+zv{f_{x}}_1\right)dxdt \nonumber \\=&\int_0^1\int_0^1 z\frac{d}{dt}f_1dxdt+\int_0^1\int_0^1 zv{f_{x}}_1dxdt.\label{eq:1}
\end{align}
Doing integration by parts again on the second summand, we get 
\begin{align*}
     &\int_0^1\int_0^1 zv{f_{x}}_1dxdt =\int_0^1 (zvf_1)(1,t)dt -\int_0^1 (zvf_1)(0,t)dt -\int_0^1\int_0^1 \frac{d(zv)}{dx}fdxdt.
\end{align*}Conditions (\ref{EL2}) and (\ref{EL3}) make the boundary terms vanish, leaving us with the equality
$$
\int_0^1\int_0^1 zv{f_{x}}_1dxdt =\int_0^1\int_0^1 z_t f_1dxdt.
$$
Continuing the above computation for~\eqref{eq:1}, we end up with 
\begin{align*}
    &\int_0^1\int_0^1\left( z\frac{d}{dt}f_1+zv{f_{x}}_1\right)dxdt \\
    =&\int_0^1\int_0^1 \left(z\frac{d}{dt}f_1dxdt+f_1 \frac{d}{dt}z \right)dxdt\\
    =&\int_0^1\int_0^1 \frac{d}{dt}(zf_1) dx dt \\
    =&\int_0^1 z(x,1)f_1(x,1)dx -\int_0^1z(x,0)f_1(x,0)dx\,.
\end{align*}
Since $\rho^s(\cdot,0)  =\rho_0 + s \theta \; \text{and}  \; \rho^s(\cdot,1)=\rho_1$, we see that $$f_1(x,0)=\theta\; \text{and}  \;f_1(x,1)=0.$$
The derivative is thus equal to
\begin{align*}
&\lim_{s\rightarrow 0 }\frac{d_{\text{HV}}^2(\rho_0+s\theta,\rho_1)-d_{\text{HV}}^2(\rho_0,\rho_1)}{s}=\lim_{s\rightarrow 0 } \int_0^1\int_0^1\left( vv_1+v_x(v_1)_x+v_{xx}(v_1)_{xx}+zz_1\right)dxdt\\
&=\lim_{s\rightarrow 0 } \int_0^1 z(x,1)\left(f_1(x,1)-z(x,0)f_1(x,0)\right)dx=-\int_0^1z(x,0)\theta(x)dx.
\end{align*}
  \end{proof}

In Section~\ref{hessprop}, we will explore the (local) convexity properties of the squared HV metric and an important object of study will be the Hessian of the functional 
$$
\Phi: \rho_0\mapsto d_{\text{HV}}^2(\rho_0,\rho_1).
$$ 
We can obtain an analytic formula for the Hessian using ideas similar to those in the above proof. 
  \begin{lemma}
  \label{lemma-hess}
      For fixed $\rho_0,\rho_1\in H^1(0,1)$, let $\rho_0+s\theta\in H^{1}(0,1)$ be the perturbation of the first signal. If the tuple $(f,v,z)$ denotes the minimizing geodesic between $\rho_0$ and $\rho_1$ under the HV metric, then the following equation holds
\begin{equation}\label{eq:2nd_var}
d_\text{HV}^2(\rho_0+s\theta,\rho_1)-d_\text{HV}^2(\rho_0,\rho_1)=-s\int_0^1 z(x,0)\partial_sg_s(x)dx-\frac{s^2}{2}\int_0^1z_1(x,0)\theta(x)dx+o(s^2)\,,
\end{equation}
where $z_1$ appears in the second proof for Lemma~\ref{lemma-grad}.
  \end{lemma}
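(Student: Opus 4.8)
The plan is to continue the perturbation expansion from the alternate proof of Lemma~\ref{lemma-grad} one order further in $s$. In that proof we wrote $\rho^s = f + sf_1 + s^2 f_2 + o(s^2)$, and similarly for $v^s$ and $z^s$, and we extracted the $O(s)$ term of $d_{\text{HV}}^2(\rho_0+s\theta,\rho_1)$. Here I would substitute the same ansatz into the action~\eqref{action} and carefully collect the coefficient of $s^2$. Expanding $\kappa (v+sv_1+s^2v_2)^2 + \lambda(\cdots)_x^2 + \varepsilon(\cdots)_{xx}^2 + (z+sz_1+s^2z_2)^2$ and keeping the quadratic-in-$s$ terms produces two kinds of contributions: a "cross" term $v v_2 + v_x (v_2)_x + v_{xx}(v_2)_{xx} + z z_2$ involving the second-order correction, and a genuinely quadratic term $\tfrac12\big(v_1^2 + (v_1)_x^2 + (v_1)_{xx}^2 + z_1^2\big)$ involving only the first-order correction.

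Next I would handle each piece. For the cross term in $f_2, v_2, z_2$, the strategy is identical to the one used for the first-order term: integrate by parts in $x$, use the fourth-order Euler--Lagrange equation~\eqref{EL1} satisfied by $(v,z,f)$ to replace $\varepsilon v_{xxxx}-\lambda v_{xx}+\kappa v$ by $-z f_x$, use the linearized transport relation defining $z_2$ (namely $z_2 = \partial_t f_2 + v (f_2)_x + v_2 f_x + v_1 (f_1)_x$, obtained by expanding~\eqref{EL4} to order $s^2$), and use the boundary conditions~\eqref{EL2} and the continuity equation~\eqref{EL3} for $z$ to kill boundary terms. Exactly as before, this should collapse to $\int_0^1\int_0^1 \tfrac{d}{dt}(z f_2)\,dx\,dt = \int_0^1 z(x,1)f_2(x,1)\,dx - \int_0^1 z(x,0) f_2(x,0)\,dx$. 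Since the endpoints of $\rho^s$ are $\rho_0+s\theta$ and $\rho_1$ with no $s^2$ part, we have $f_2(x,0)=0$ and $f_2(x,1)=0$, so the entire cross term vanishes. This is the crucial cancellation: the second-order correction to the geodesic does not contribute, which is a manifestation of the first-order optimality (envelope-type) structure.

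The remaining pure quadratic term $\tfrac{s^2}{2}\int_0^1\int_0^1\big(\kappa v_1^2 + \lambda (v_1)_x^2 + \varepsilon (v_1)_{xx}^2 + z_1^2\big)\,dx\,dt$ then needs to be rewritten as $-\tfrac{s^2}{2}\int_0^1 z_1(x,0)\theta(x)\,dx$. For this I would run the same integration-by-parts computation used for the first-order term, but now applied to the linearized system: $(v_1,z_1,f_1)$ satisfy the linearization of~\eqref{EL1}--\eqref{EL5} about $(v,z,f)$, with endpoint data $f_1(\cdot,0)=\theta$, $f_1(\cdot,1)=0$. Concretely, I would integrate $\int\int \kappa v_1^2 + \lambda(v_1)_x^2 + \varepsilon(v_1)_{xx}^2$ by parts to get $\int\int v_1(\varepsilon (v_1)_{xxxx} - \lambda (v_1)_{xx} + \kappa v_1)$, use the linearized fourth-order equation to turn this into terms involving $z_1 f_x$ and $z (f_1)_x$, combine with $\int\int z_1^2$ written via the linearized transport law $z_1 = \partial_t f_1 + v(f_1)_x + v_1 f_x$, and push through the same boundary cancellations (linearized versions of~\eqref{EL2} and~\eqref{EL3}, i.e. $v_1 = (v_1)_{xx} = 0$ on $\partial\Omega$ and $(z_1)_t + (z_1 v)_x + (z v_1)_x = 0$). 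This should telescope to $\int_0^1\int_0^1 \tfrac{d}{dt}(z_1 f_1)\,dx\,dt$ plus possibly a symmetric leftover, ultimately yielding $\int_0^1 z_1(x,1)f_1(x,1)\,dx - \int_0^1 z_1(x,0)f_1(x,0)\,dx = -\int_0^1 z_1(x,0)\theta(x)\,dx$. Assembling the two pieces gives~\eqref{eq:2nd_var}.

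The main obstacle I anticipate is the bookkeeping of the linearized Euler--Lagrange system: one must expand each of~\eqref{EL1}--\eqref{EL5} to order $s^2$ to identify precisely which equations $f_1, v_1, z_1$ (and the now-discarded $f_2,v_2,z_2$) satisfy, and then verify that the cross terms really do combine into a perfect $t$-derivative $\tfrac{d}{dt}(z f_2)$ and that the pure first-order terms combine into $\tfrac{d}{dt}(z_1 f_1)$ (up to a factor that accounts for the $\tfrac12$ in front of the action). A secondary technical point is justifying the asymptotic expansion itself --- that $\rho^s, v^s, z^s$ admit such a Taylor expansion in $s$ with $o(s^2)$ remainders in the relevant function spaces --- but since the statement is phrased formally with an $o(s^2)$ term and parallels the already-accepted first-order argument, I would treat the differentiability of the geodesic in the data as given (or cite the regularity/stability results of~\cite{han2024hv}) and focus the written proof on the algebraic cancellation.
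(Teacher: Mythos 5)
Your overall strategy is exactly the paper's: expand the action to second order in $s$, kill the second-order corrections $(f_2,v_2,z_2)$ using the Euler--Lagrange equations and the endpoint conditions, and then reduce the pure quadratic term via the linearized system. However, there is a concrete error in your second paragraph: the cross term does \emph{not} vanish entirely. Carrying out your own plan, integration by parts plus~\eqref{EL1} turns
$\iint (\kappa v v_2 + \lambda v_x (v_2)_x + \varepsilon v_{xx}(v_2)_{xx} + z z_2)\,dx\,dt$ into $\iint(-v_2 z f_x + z z_2)\,dx\,dt$, and substituting the order-$s^2$ transport relation $z_2=\partial_t f_2 + v\,(f_2)_x + v_2 f_x + v_1 (f_1)_x$ (which you correctly wrote down) cancels the $v_2 z f_x$ pieces and reduces the terms in $f_2$ to $\iint \tfrac{d}{dt}(z f_2)=0$, but it leaves behind the term
\begin{equation*}
\iint z\, v_1\, (f_1)_x \, dx\, dt ,
\end{equation*}
which has no reason to vanish. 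This leftover is not a nuisance: it is essential. In the paper it is carried into the pure quadratic part, so that the quantity to be reduced is $\iint\bigl((v_1)^2+\lambda (v_1)_x^2+\varepsilon(v_1)_{xx}^2+z_1^2+2z (f_1)_x v_1\bigr)$. Only with the extra $2z(f_1)_x v_1$ does the linearized equation $\varepsilon (v_1)_{xxxx}-\lambda (v_1)_{xx}+\kappa v_1+z(f_1)_x+z_1 f_x=0$ produce the combination $\iint\bigl(z_1 \tfrac{d}{dt}f_1+(f_1)_x(z_1 v+ v_1 z)\bigr)$, whose $x$-integration by parts matches the linearized continuity law $\partial_t z_1=-\partial_x(z_1 v+v_1 z)$ and telescopes to $-\int z_1(x,0)\theta(x)\,dx$. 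If you drop the leftover, as your write-up does, you instead obtain $(f_1)_x(z_1 v - v_1 z)$, the signs no longer match the continuity equation, and the computation does not close. Your hedge about ``a symmetric leftover'' in the third paragraph gestures at the right fix, but as written the claim that ``the entire cross term vanishes'' is false and the proof would not go through literally; otherwise the argument coincides with the paper's.
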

  \begin{proof}
Expanding the difference between the perturbed and initial systems gives us 
\begin{align*}
   d_\text{HV}^2(\rho_0+s\theta,\rho_1)-d_\text{HV}^2(\rho_0,\rho_1)= \iint \big(\kappa (v + s v_1 + s^2v_2+o(s^2))^2+\lambda (v + s v_1 +s^2v_2 +o(s^2))_{x}^2& \\
     +\varepsilon (v + s v_1 + s^2v_2+o(s^2))_{xx}^2 + (z + s z_1 +s^2z_2+o(s^2) )^2 \big) dxdt&
\end{align*}
Using Lemma~\ref{lemma-grad} and combining the terms multiplied by $s^2$, we get
\begin{align}
    &d_\text{HV}^2(\rho_0+s\theta,\rho_1)-d_\text{HV}^2(\rho_0,\rho_1)=-s\int_0^1z(x,0)\theta(x)dx+\\
    & \frac{s^2}{2} \iint \left( 2{v_{xx}}_2{v_{xx}}+2{v_{x}}_2{v_{x}}+2v_2v+\left({v_{xx}}_1\right)^2+\left({v_{x}}_1\right)^2+\left(v_1\right)^2+2zz_2+z_1^2\right) dxdt+o(s^2).
\end{align}
We split the last integrand into two parts and treat each separately. Firstly, we can transfer all the derivatives with respect to $x$ to $v$ using integration by parts: 
\begin{align}
&2\int_0^1\int_0^1 \left( {v_{xx}}_2{v_{xx}}+{v_{x}}_2{v_{x}}+v_2v+zz_2 \right) dxdt \nonumber \\
=& 2\int_0^1\int_0^1 \left(v_2\left({v_{xxxx}}-{v_{xx}}+v\right)+zz_2 \right) dxdt. \label{eq:2}
\end{align}
Expanding the set of Euler--Lagrange equations (\ref{EL1})-(\ref{EL5}) in $s$, we get the following expression:
$$
z_2=\frac{d}{dt}f_2+v_2{f_{x}}+v_1{f_{x}}_1+v{f_{x}}_2.
$$ Substituting it into~\eqref{eq:2}, we arrive at a similar expression as the one in the proof of Lemma~\ref{lemma-grad}:
\begin{align*}
&2\int_0^1\int_0^1 \left(v_2\left({v_{xxxx}}-{v_{xx}}+v\right)+zz_2\right)dxdt \\
=&2\int_0^1\int_0^1 \left(v_2\left({v_{xxxx}}-{v_{xx}}+v\right)+z\left(\frac{d}{dt}f_2+v_2{f_{x}}+v_1{f_{x}}_1+v{f_{x}}_2\right)\right)dxdt\\
=&2\int_0^1\int_0^1\left( v_2\left({v_{xxxx}}-{v_{xx}}+v+z{f_{x}}\right)+z\left(\frac{d}{dt}f_2+v_1{f_{x}}_1+v{f_{x}}_2 \right)\right)dxdt\\
=&2\int_0^1\int_0^1\left( z\frac{d}{dt}f_2+zv{f_{x}}_2\right)dxdt+2\int_0^1\int_0^1 zv_1{f_{x}}_1dxdt.
\end{align*}
Similarly as before, the first summand is $0$ since 
$$
\int_0^1\int_0^1\left( z\frac{d}{dt}f_2+zv{f_{x}}_2\right)dxdt=\int_0^1z(x,1)f_2(x,1)dx-\int_0^1z(x,0)f_2(x,0)dx
$$ and the fact that $f_2(x,1)=f_2(x,0) = 0$ due to  Equation~(\ref{EL5}).  

So far we have managed to simplify the expression and obtain
\begin{align*}
    &d_\text{HV}^2(\rho_0+s\theta,\rho_1)-d_\text{HV}^2(\rho_0,\rho_1)=-s\int_0^1z(x,0)\theta(x)dx+\\
    & \frac{s^2}{2} \iint \left(\left({v_{xx}}_1\right)^2+\left({v_{x}}_1\right)^2+\left(v_1\right)^2+z_1^2+2z{f_{x}}_1v_1\right) dxdt+o(s^2).
\end{align*}
Using integration by parts on $v_1$ and recalling the Euler--Lagrange expansions of $z_1=\frac{d}{dt}f_1+v{f_{x}}_1+v_1{f_{x}}$ and ${v_{xxxx}}_1-{v_{xx}}_1+v_1+z{f_{x}}_1+z_1{f_{x}}=0$ we get 

\begin{align*}
   &\int_0^1\int_0^1 \left( \left({v_{xx}}_1\right)^2+\left({v_{x}}_1\right)^2+\left(v_1\right)^2+z_1^2+2z{f_{x}}_1v_1 \right) dxdt\\
  = &\int_0^1\int_0^1\left( v_1\left({v_{xxxx}}_1-{v_{xx}}_1+v_1+z{f_{x}}_1\right)+z_1\left(\frac{d}{dt}f_1+v{f_{x}}_1+v_1{f_{x}}\right)+zv_1{f_{x}}_1\right)dxdt\\
=&\int_0^1\int_0^1 \left(v_1\left({v_{xxxx}}_1-{v_{xx}}_1+v_1+z{f_{x}}_1+z_1{f_{x}}\right)+z_1\frac{d}{dt}f_1+z_1v{f_{x}}_1+zv_1{f_{x}}_1\right)dxdt\\
=&\int_0^1\int_0^1\left( z_1\frac{d}{dt}f_1+{f_{x}}_1(z_1v+v_1z)\right)dxdt.
\end{align*}Another application of integration by parts allows us to transfer the derivative with respect to $x$ away from $f_1$. Continuing the above equation,
\begin{align*}
&\quad\int_0^1\int_0^1\left( z_1\frac{d}{dt}f_1+{f_{x}}_1(z_1v+v_1z)\right)dxdt \\
& = \int_0^1\int_0^1 \left( z_1\frac{d}{dt}f_1-f_1\frac{d}{dx}(z_1v+v_1z)\right) dxdt+\int_0^1\left( f_1(z_1v+v_1z)\big|^1_0 \right) dt\\
&=\int_0^1\int_0^1 \left(z_1\frac{d}{dt}f_1-f_1\frac{d}{dx}(z_1v+v_1z)\right) dxdt,
\end{align*} where the boundary terms all equal to $0$. Based on Equation (\ref{EL3}), we have
$$
\frac{d}{dt}z_1=-\frac{d}{dx}(v_1z+z_1v)\,.
$$
We can then simplify the expression further, 
\begin{align*}
&\quad \int_0^1\int_0^1 \left(z_1\frac{d}{dt}f_1-f_1\frac{d}{dx}(z_1v+v_1z)\right)dxdt\\
&=\int_0^1\int_0^1 \left(z_1\frac{d}{dt}f_1+f_1\frac{d}{dt}z_1\right)dxdt\\
&=\int_0^1\int_0^1 \frac{d}{dt}\left(z_1f_1\right)dxdt\\
&=\int_0^1z_1(x,1)f_1(x,1)-z_1(x,0)f_1(x,0)dx=-\int_0^1z_1(x,0)f_1(x,0)dx\,.
\end{align*}
Finally, we obtain~\eqref{eq:2nd_var}.
\end{proof}

\section{Properties of the Hessian Operator}\label{hessprop}
In this section, we analyze the local convexity of the HV metric by studying the Hessian operator for the functional
\[
\Phi: \rho_0 \mapsto d_{\text{HV}}^2(\rho_0, \rho_1).
\] 
Our motivation for this analysis stems from the observation that the Hessian operator for the $L^2$ norm-based objective functional
\[
\rho_0 \mapsto \frac{1}{2}\|\rho_0 - \rho_1\|_2^2
\]
is the identity operator $I$, whereas the Hessian operator for the $2$-Wasserstein metric-motivated objective functional
\[
\rho_0 \mapsto W_2^2(\rho_0, \rho_1)
\]
is $\left( \nabla\cdot  \left(\rho_0 \nabla\right)\right)^{-1}$ when $\rho_0 = \rho_1$. In other words, the squared $W_2$ metric is asymptotically equivalent to the $\rho_0$-weighted $\dot{H}^{-1}$ semi-norm.

When comparing the spectral properties of the two operators, $I$ and $\left( \nabla\cdot  \left(\rho_0 \nabla\right)\right)^{-1}$, the latter places greater emphasis on the low-frequency components of the perturbation while being less sensitive to high-frequency perturbations. This distinction is one of the fundamental reasons why the $W_2$ metric, when used to quantify data misfits, is more robust to data noise and exhibits a wider basin of attraction when addressing translation- and dilation-type signal changes~\cite{engquist2020quadratic}.

Since the local second-order information of the objective functional can shed light on such important properties of the given metric under study, we will investigate the local behavior of the squared HV metric by analyzing its Hessian operator. This approach allows us to understand how the HV metric penalizes perturbations of different frequencies, especially in comparison to other commonly used metrics such as the $L^2$ norm and the $2$-Wasserstein metric.

\subsection{Deriving the Hessian Operator}
Similar to the linearization of the squared  $2$-Wasserstein metric first established in~\cite{otto2000generalization}, we consider the special case when $\rho_0=\rho_1=\rho$. In this setting, the tuple $(f,v,z)$ minimizing the HV distance is $(\rho,0,0)$ and the perturbation equations simplify to 
\begin{eqnarray*}
  \rho^s &=& \rho + s f_1 + s^2 f_2 + o(s^2)\\
  v^s &=& 0 + s v_1 + s^2 v_2 + o(s^2) \\
  z^s &=& 0 + s z_1 + s^2 z_2 + o(s^2)\\
 \rho^s(\cdot,0)  &=&\rho + s \theta(x) \; \text{ and }  \; \rho^s(\cdot,1)=\rho.
\end{eqnarray*}
Basd on Equation (\ref{EL3}), we note that $z_1(x,t)$ is time independent since 
\begin{align*}
    \frac{d}{dt}z_1=\left(v_1z+z_1v\right)_x=0\implies z_1(x,t)=z_1(x).
\end{align*}
As a consequence, $v_1(x,t)$ is also time-independent due to Equation~(\ref{EL1}), Equation~(\ref{EL3}) and $\rho$ all being independent of time $t$. This allows us to find an analytic expression for the variable $z_1$. 

Integrating~\eqref{EL4} with respect to $t$ and collecting the terms multiplied by $s$, we get 
\begin{align*}
    f_1(x,1)-f_2(x,0)=z_1-v_1\rho_x. 
\end{align*} 
Using the boundary conditions~\eqref{EL5}, we have 
\begin{align}
    z_1=v_1\rho_x -\theta.\label{z1}
\end{align}
Substituting~\eqref{z1} into (\ref{EL1}) allows us to express $v_1$ as the solution of a differential operator 
$$
L_{\rho}=\varepsilon\Delta^2-\lambda\Delta+I(\kappa+\rho_x^2),
$$  
with boundary conditions $v=0$ and $v_{xx} = 0$ at $x\in \{0,1\}$.
That is, $v_1=L_{\rho}^{-1}(\theta\rho_x)$ and consequently 
$
z_1=\rho_xL_{\rho}^{-1}(\theta\rho_x)-\theta$ based on~\eqref{z1}.

According to Lemma~\ref{lemma-hess},
\begin{eqnarray}
d_\text{HV}^2(\rho_0+s\theta,\rho_1)-d_\text{HV}^2(\rho_0,\rho_1) 
&=& \frac{s^2}{2} \int\theta(I-M_{\rho_x}L^{-1}_{\rho}M_{\rho_x})\theta dx +o(s^2)  \label{Hs}
\end{eqnarray}

where $M_f$ is the linear multiplication operator $M_f(g)=fg$.  Therefore, the local convexity of the squared HV metric, more precisely, the Hessian of the functional $\Phi: \rho_0 \mapsto d_\text{HV}^2(\rho_0, \rho_1)$ at $\rho_0 = \rho_1$ is determined by the following operator
\begin{equation}\label{eq:Binverse}
I - M_{\rho_x}L^{-1}M_{\rho_x}=I - \rho_x(\varepsilon\Delta^2-\lambda\Delta+I(\kappa+\rho_x^2))^{-1}\rho_x,
\end{equation}
which we will refer to as the \textit{Hessian operator} hereafter. We remark that our analysis is based on the assumption that $\rho_0 = \rho_1 = \rho$. One would expect to get a different asymptotic expansion~\eqref{Hs} if linearizing around $\rho_0\neq \rho_1$.
Moreover, we can split the integral \eqref{Hs} into the sum of two integrals 
$$
\int_{A_{1}}\theta^2(x)dx+\int_{A_{2}}\theta(I-M_{\rho_x}L^{-1}_{\rho}M_{\rho_x})\theta dx
$$ where $A_{1}:=\{x| \rho_{x}(x)=0\},A_{2}:=\{x| \rho_{x}(x)\neq0\}$ and $A_1\cup A_2=[0,1].$ The first integral represents the contribution of the $L^2$ norm to the Hessian, while the second integral represents the contribution. With this separation in mind, we will mostly focus on the case when the measure of the set $A_1$ is equal to 0.
\subsection{Properties of the Operator $B$}\label{subsec:B_def}

To perform a detailed analysis of the Hessian operator given in~\eqref{eq:Binverse}, 
we introduce the following linear operator $B$, defined by
\begin{equation}\label{eq:B}
        Bv=\frac{1}{\rho_x}\left(\varepsilon\left(\frac{v}{\rho_x}\right)_{xxxx}-\lambda\left(\frac{v}{\rho_x}\right)_{xx}+\frac{v}{\rho_x}\left(\kappa+\rho_x^2\right)\right),
\end{equation}
which is closely related to~\eqref{eq:Binverse}. The operator $B$ has a form similar to $\Delta^2-\Delta+I$, suggesting that a suitable domain $D(B)$ would be

$$
D(B)=\left\{w\bigg|\frac{w}{\rho_x}\in H^2\right\}.
$$ Before we determine the range $R(B),$ let us discuss some of the operator's properties.
\begin{proposition}
\label{inner}
Consider $B$ given in~\eqref{eq:B} with $v$ satisfying the following boundary condition
\[
\begin{cases}
            v(0)=v(1)=0\\
            \left(\frac{v}{\rho_x}\right)_{xx}(0)=\left(\frac{v}{\rho_x}\right)_{xx}(1)=0.
\end{cases}
\]
Then $B$ is self-adjoint and positive definite when viewed as an operator  $B:D(B)\subset L^2(0,1)\to L^2(0,1)$.
\end{proposition}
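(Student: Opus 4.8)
The plan is to strip off the weights via the substitution $w=v/\rho_x$, which identifies $B$ with the conjugate $M_{1/\rho_x}\,L_\rho\,M_{1/\rho_x}$ of the standard fourth-order operator $L_\rho=\varepsilon\Delta^2-\lambda\Delta+I(\kappa+\rho_x^2)$, where $M_g$ denotes multiplication by $g$. Under this change of variables the boundary conditions imposed on $v$ turn into the Navier (``hinged'') conditions $w(0)=w(1)=0$, $w_{xx}(0)=w_{xx}(1)=0$, and the natural operator domain is $D(B)=\{v\in L^2(0,1):\,w=v/\rho_x\in H^4(0,1)\text{ satisfies these boundary conditions}\}$. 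For the symmetry and positivity parts I only need $\rho_x\neq0$ a.e.\ (guaranteed by the standing assumption $|A_1|=0$ of this section); for the self-adjointness part I will in addition assume $\rho_x,1/\rho_x\in L^\infty(0,1)$, so that $M_{\rho_x}$ and $M_{1/\rho_x}$ are bounded, self-adjoint, and boundedly invertible on $L^2(0,1)$.

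First I would establish symmetry. For $v_1,v_2\in D(B)$ with $w_i=v_i/\rho_x$, writing $\langle Bv_1,v_2\rangle_{L^2}=\langle L_\rho w_1,w_2\rangle_{L^2}$ and integrating by parts twice on the $\varepsilon\Delta^2$ term and once on the $-\lambda\Delta$ term, every boundary contribution carries a factor $w_i$ or $(w_i)_{xx}$ evaluated at $x\in\{0,1\}$ and hence vanishes. What remains is the bilinear form
\[
a(w_1,w_2)=\int_0^1\Bigl(\varepsilon\,(w_1)_{xx}(w_2)_{xx}+\lambda\,(w_1)_x(w_2)_x+(\kappa+\rho_x^2)\,w_1 w_2\Bigr)\,dx,
\]
which is visibly symmetric in its two arguments, so $\langle Bv_1,v_2\rangle=\langle v_1,Bv_2\rangle$. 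Positive definiteness is then immediate: with $v_1=v_2=v$ and $w=v/\rho_x$,
\[
\langle Bv,v\rangle=a(w,w)=\int_0^1\Bigl(\varepsilon\,w_{xx}^2+\lambda\,w_x^2+(\kappa+\rho_x^2)\,w^2\Bigr)\,dx\;\ge\;\kappa\,\|w\|_{L^2}^2\;>\;0
\]
for every $v\neq0$, since $\varepsilon,\kappa>0$, $\lambda\ge0$, and $w$ is not identically zero; under the extra assumption $1/\rho_x\in L^\infty$ this even gives the uniform bound $\langle Bv,v\rangle\ge(\kappa/\|\rho_x\|_\infty^2)\,\|v\|_{L^2}^2$.

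Finally I would upgrade symmetry to self-adjointness. The form $a$ is closed, symmetric, and coercive on $V=H^2(0,1)\cap H^1_0(0,1)$, so by the representation theorem it represents a self-adjoint operator with compact resolvent, which is exactly $L_\rho$ equipped with the Navier boundary conditions; conjugating by the bounded self-adjoint operator $M_{1/\rho_x}$ with bounded inverse then shows that $B=M_{1/\rho_x}L_\rho M_{1/\rho_x}$ is self-adjoint on $D(B)=M_{\rho_x}D(L_\rho)$, precisely the domain above. A more self-contained route is to apply Lax--Milgram to the weighted form $b(v_1,v_2):=a(v_1/\rho_x,v_2/\rho_x)$: coercivity yields a weak solution of $Bv=g$ for every $g\in L^2(0,1)$, a standard one-dimensional elliptic regularity argument places it in $D(B)$, so $\operatorname{ran}(B)=L^2(0,1)$, and since a symmetric operator with full range is automatically self-adjoint, we are done.

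I expect the self-adjointness step to be the main obstacle. The formal part (symmetry of $a$ and its strict positivity) is routine; the real content lies in (i) choosing the domain so that the attached boundary conditions are exactly those making $B$ self-adjoint rather than merely symmetric — equivalently, so that the deficiency indices vanish — and (ii) controlling the weight $\rho_x$, since the conjugation picture, and with it the clean description of $D(B)$, requires $\rho_x$ to be bounded and bounded away from zero. If $\rho_x$ vanishes on a positive-measure set or is unbounded, one would instead have to analyze $B$ directly through its weighted quadratic form with a more delicate domain characterization; under the running hypotheses of this section ($|A_1|=0$ and $\rho_x,1/\rho_x\in L^\infty$) the argument above goes through.
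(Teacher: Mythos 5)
Your proof is correct, and its first two paragraphs are essentially the paper's own argument: both proofs pass to $w=v/\rho_x$, integrate by parts using the hinged boundary conditions $w=w_{xx}=0$ at $x\in\{0,1\}$ to land on the symmetric bilinear form $a(w_1,w_2)=\int_0^1\bigl(\varepsilon (w_1)_{xx}(w_2)_{xx}+\lambda (w_1)_x(w_2)_x+(\kappa+\rho_x^2)w_1w_2\bigr)\,dx$, and read off strict positivity from $a(w,w)\ge\kappa\|w\|_{L^2}^2$. Where you genuinely go beyond the paper is the third paragraph: the paper's proof stops at symmetry of the form, which for an unbounded operator establishes only that $B$ is \emph{symmetric}, not self-adjoint; you close that gap by invoking the representation theorem for the closed coercive form $a$ on $H^2\cap H_0^1$ (or, equivalently, Lax--Milgram plus elliptic regularity to get $\operatorname{ran}(B)=L^2$, whence a symmetric surjective operator is self-adjoint), and then transporting self-adjointness through the bounded, boundedly invertible conjugation $B=M_{1/\rho_x}L_\rho M_{1/\rho_x}$. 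You are also right to flag the hypotheses this requires: the conjugation picture needs $\rho_x,1/\rho_x\in L^\infty$, which the proposition does not state but which matches the standing assumptions $0<m\le|\rho_x|\le M$ used later in Section~4, and your domain $\{v: v/\rho_x\in H^4,\ \text{hinged BCs}\}$ is the one the paper itself adopts in Section~4.3, rather than the $H^2$ domain written just above the proposition. In short, your argument buys an actual proof of self-adjointness (vanishing deficiency indices) at the cost of making explicit the boundedness assumptions on $\rho_x$ that the paper leaves implicit.
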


\begin{proof}
    We first show that the operator $B$ is self-adjoint. It holds that
    \begin{align*}
        \langle B v,w\rangle=\bigg \langle \frac{1}{\rho_x}\left(\varepsilon\left(\frac{v}{\rho_x}\right)_{xxxx}-\lambda\left(\frac{v}{\rho_x}\right)_{xx}+\frac{v}{\rho_x}\left(\kappa+\rho_x^2\right)\right),w\bigg\rangle\\
        =\bigg\langle \varepsilon\left(\frac{v}{\rho_x}\right)_{xxxx}-\lambda\left(\frac{v}{\rho_x}\right)_{xx}+\frac{v}{\rho_x}\left(\kappa+\rho_x^2\right),\frac{w}{\rho_x}\bigg\rangle.
    \end{align*}
 Performing integration by parts and using the boundary conditions, we get
 $$
 \bigg\langle{v},\frac{1}{\rho_x}\left(\varepsilon\left(\frac{v}{\rho_x}\right)_{xxxx}-\lambda\left(\frac{v}{\rho_x}\right)_{xx}+\frac{w}{\rho_x}\left(\kappa+\rho_x^2\right)\right)\bigg\rangle=\langle v,Bw\rangle.
 $$ Positive definiteness is shown in a similar way using integration by parts and taking into account the boundary conditions
 $$\langle Bv,v\rangle=\int_0^1 \varepsilon\left( \left(\left(\frac{v}{\rho_x}\right)_{xx}\right)^2+\lambda\left(\left(\frac{v}{\rho_x}\right)_{x}\right)^2+\frac{v^2}{\rho_x^2}(\kappa+\rho_x^2) \right) dx \geq0.$$
 The equality holds if and only if $v=0$ making $B$ strictly positive definite.
\end{proof}
\begin{remark}
    Since $B$ is strictly positive definite, we know that $B$ is injective. In particular, we can define $B^{-1}: R(B)\to D(B).$ It is a simple check that
    \[
    B^{-1} = M_{\rho_x}L^{-1}M_{\rho_x}=\rho_x(\varepsilon\Delta^2-\lambda\Delta+I(\kappa+\rho_x^2))^{-1}\rho_x
    \]
    following~\eqref{eq:Binverse}.
\end{remark}Next, we define a new inner product on the real vector space of measurable functions given by
$$
\langle f,g\rangle_B:=\int f\, Bg\, dx.
$$ 
Proposition \ref{inner} below asserts that this is a valid inner product and gives an expression for the norm
\begin{equation}
\label{bnorm}
\|v\|^2_B=\int_0^1\left(\varepsilon\left(\left(\frac{v}{\rho_x}\right)_{xx}\right)^2+\lambda\left(\left(\frac{v}{\rho_x}\right)_{x}\right)^2+\frac{v^2}{\rho_x^2}(\kappa+\rho_x^2)\right)dx\geq0.
\end{equation}
Next we assert that a function $v$ belongs to $D(B) = \{w: w\rho_x^{-1}\in H^2\}$ if and only if $\|v\|_B<\infty$.

\begin{proposition}
Let $|\rho_x|\in[0,M]$ and assume that $\kappa,\epsilon>0$ and $\lambda\geq0.$ Then $\frac{v}{\rho_x}\in H^2$ if and only if $\|v\|_B<\infty.$
\end{proposition}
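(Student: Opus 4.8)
The plan is to pass to the substituted function $w:=v/\rho_x$ and recognize the claim as a weighted-Sobolev norm equivalence. By the formula~\eqref{bnorm},
\[
\|v\|_B^2=\int_0^1\Bigl(\varepsilon\,(w_{xx})^2+\lambda\,(w_x)^2+(\kappa+\rho_x^2)\,w^2\Bigr)\,dx,
\]
so the assertion is exactly that $w\in H^2(0,1)$ if and only if this integral is finite. The only property of $\rho_x$ that enters is the two-sided bound on the zeroth-order weight furnished by $|\rho_x|\le M$, namely $\kappa\le\kappa+\rho_x^2\le\kappa+M^2$, and the hypotheses $\kappa,\varepsilon>0$ guarantee that the zeroth- and second-order terms are genuinely present.

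For the implication $w\in H^2(0,1)\Rightarrow\|v\|_B<\infty$ I would just estimate term by term: since $w,w_x,w_{xx}\in L^2(0,1)$ and the weight is bounded above,
\[
\|v\|_B^2\le\varepsilon\|w_{xx}\|_{L^2}^2+\lambda\|w_x\|_{L^2}^2+(\kappa+M^2)\|w\|_{L^2}^2<\infty .
\]

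For the converse, assume $\|v\|_B<\infty$, which in particular means $w$ possesses the weak derivatives making the integral meaningful. From $\varepsilon>0$ the first term forces $w_{xx}\in L^2(0,1)$, and from $\kappa+\rho_x^2\ge\kappa>0$ the last term forces $w\in L^2(0,1)$. If $\lambda>0$, the middle term additionally yields $w_x\in L^2(0,1)$ and we are done. The case $\lambda=0$ is the one genuine obstacle: one must recover $w_x\in L^2(0,1)$ from knowing only that $w\in L^2(0,1)$ and $w_{xx}\in L^2(0,1)$. On the bounded interval $(0,1)$ this is exactly the one-dimensional interpolation (Ehrling / Gagliardo--Nirenberg) inequality $\|w_x\|_{L^2(0,1)}\le C\bigl(\|w\|_{L^2(0,1)}+\|w_{xx}\|_{L^2(0,1)}\bigr)$, equivalently the statement $\{w\in L^2(0,1):w_{xx}\in L^2(0,1)\}=H^2(0,1)$; a self-contained justification writes $w(x)=ax+b+\int_0^x\!\int_0^t w_{xx}(s)\,ds\,dt$, observes that the affine part is harmless on a bounded interval, differentiates once, and estimates with Cauchy--Schwarz. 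This gives $w\in H^2(0,1)$ and closes the equivalence.

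In summary, the only non-mechanical ingredient is the $L^2$-to-$H^1$ interpolation invoked when $\lambda=0$; for $\lambda>0$ all three semi-norms of $w$ appear explicitly in $\|v\|_B$ and the proposition is a direct two-sided comparison using $\kappa\le\kappa+\rho_x^2\le\kappa+M^2$ together with $\kappa,\varepsilon>0$.
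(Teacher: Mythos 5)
Your argument is correct and follows the same overall strategy as the paper: both reduce the claim to a two-sided comparison between $\|v\|_B^2$ and $\|v/\rho_x\|_{H^2}^2$, using $\kappa\le\kappa+\rho_x^2\le\kappa+M^2$ for the zeroth-order term and treating $\lambda=0$ as the only nontrivial case. The one genuine difference is the lemma you invoke there: the paper controls the missing first-order term by a Poincar\'e(--Wirtinger) inequality $\|w_x\|_{L^2}^2\le C\|w_{xx}\|_{L^2}^2$, which tacitly relies on the boundary conditions of the preceding proposition (it fails for $w=x$ without them, since one needs $w_x$ to have zero mean, e.g.\ via $w(0)=w(1)=0$), whereas you use the boundary-condition-free interpolation inequality $\|w_x\|_{L^2}\le C\bigl(\|w\|_{L^2}+\|w_{xx}\|_{L^2}\bigr)$ together with the explicit representation $w(x)=ax+b+\int_0^x\!\int_0^t w_{xx}(s)\,ds\,dt$ and equivalence of norms on the affine part. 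Your version is marginally more robust, since the proposition as stated does not itself impose boundary conditions on $v$; the paper's version yields the specific constants $C_1=\min\{\kappa,\varepsilon/(1+C)\}$, $C_2=\max\{\kappa+M^2,\varepsilon\}$ that it reuses later. Either route closes the equivalence.
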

\begin{proof} We first look at the case when $\lambda>0.$
Expanding the LHS of the equivalence, we get 
    \begin{align}
    \label{expanded}
        \left\|\frac{v}{\rho_x}\right\|^2_{H^2}=\int\left(\frac{v}{\rho_x}\right)^2dx+\int \left(\left(\frac{v}{\rho_x}\right)_x\right)^2dx+\int \left(\left(\frac{v}{\rho_x}\right)_{xx}\right)^2dx.
    \end{align}
    Denoting by $A_i$ the i-th summand of the RHS and recalling the expression for $\|v\|_B$ from~\eqref{bnorm}, we see that $A_2$ and $A_3$ differ from their counterparts in $\|v\|_B$ only by a positive constant, whereas $A_1$ bounds the remaining term by considering 
    $$
    \kappa A_1\leq\int\frac{v^2}{\rho_x^2}(\kappa+\rho_x^2) \leq(\kappa+M^2)A_1.
    $$ 
    By letting  $C_1=\min\{\kappa,\varepsilon,\lambda\}$ and $C_2=\max\{\kappa+M^2,\varepsilon,\lambda\}$, we get the following chain of inequalities showing the equivalence:
    \begin{equation}
    \label{equiv}
        C_1\left\|\frac{v}{\rho_x}\right\|_{H^2}^2\leq\|v\|_B^2\leq C_2\left\|\frac{v}{\rho_x}\right\|_{H^2}^2.
    \end{equation}
    If $\lambda=0,$ then we use the Poincar\'e inequality for the middle term in \eqref{expanded} to get 
    $$
     \left\|\frac{v}{\rho_x}\right\|^2_{H^2}\leq\int\left(\frac{v}{\rho_x}\right)^2dx+\left(1+C\right)\int \left|\left(\frac{v}{\rho_x}\right)_{xx}\right|^2dx,
    $$ where C is the Poincar\'e constant. In this case, \eqref{equiv} holds with constants $C_1=\min\left\{\kappa,\frac{\varepsilon}{1+C}\right\}$ and $C_2=\max\{\kappa+M^2,\varepsilon\}$.
\end{proof}
\begin{proposition}
    $(D(B),\|\cdot\|_B)$ as defined above is a Hilbert space.  
\end{proposition}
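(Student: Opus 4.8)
The plan is to identify $(D(B),\|\cdot\|_B)$ with a Hilbert space we already understand, namely $H^{2}(0,1)$ equipped with an equivalent inner-product norm, and then use that an isometric image of a Hilbert space is again a Hilbert space. Concretely, introduce on $H^{2}(0,1)$ the weighted bilinear form
\[
\langle\phi,\psi\rangle_{*}:=\int_{0}^{1}\Big(\varepsilon\,\phi_{xx}\psi_{xx}+\lambda\,\phi_{x}\psi_{x}+(\kappa+\rho_{x}^{2})\,\phi\psi\Big)\,dx,
\]
together with the substitution map $T\colon D(B)\to H^{2}(0,1)$, $Tv=v/\rho_{x}$. Comparing with~\eqref{bnorm} one sees at once that $\|v\|_{B}^{2}=\langle Tv,Tv\rangle_{*}$, so $T$ is an isometry as soon as $\langle\cdot,\cdot\rangle_{*}$ is shown to be an inner product. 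The argument then splits into three parts: (i) $\langle\cdot,\cdot\rangle_{*}$ is a genuine inner product on $H^{2}(0,1)$ whose norm is equivalent to $\|\cdot\|_{H^{2}}$, hence complete; (ii) $T$ is a linear bijection of $D(B)$ onto $H^{2}(0,1)$; and (iii) combine (i), (ii) and the isometry relation to conclude completeness of $(D(B),\|\cdot\|_{B})$.

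Step (i) is essentially a restatement of the preceding proposition. Bilinearity and symmetry of $\langle\cdot,\cdot\rangle_{*}$ are clear, and running the same Poincar\'e-type argument (trivial when $\lambda>0$, via the interpolation estimate when $\lambda=0$), using $0\le\rho_{x}^{2}\le M^{2}$ and $\kappa>0$, yields $C_{1}\|\phi\|_{H^{2}}^{2}\le\|\phi\|_{*}^{2}\le C_{2}\|\phi\|_{H^{2}}^{2}$ for the constants of~\eqref{equiv}. In particular $\langle\cdot,\cdot\rangle_{*}$ is positive definite, and $(H^{2}(0,1),\langle\cdot,\cdot\rangle_{*})$ is a Hilbert space, being $H^{2}(0,1)$ renormed by an equivalent inner-product norm.

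Step (ii) is where the standing hypothesis $\rho_{x}\neq 0$ almost everywhere (i.e.\ $|A_{1}|=0$) is used, and it is the only place requiring care. Linearity of $T$ is obvious, and for $v\in D(B)$ the quotient $v/\rho_{x}$ is defined a.e.\ and lies in $H^{2}(0,1)$ by the very definition of $D(B)$, so $T$ maps into $H^{2}(0,1)$. For injectivity, $Tv=0$ forces $v/\rho_{x}=0$ a.e., hence $v=0$ a.e.\ on $A_{2}$, and since $|A_{1}|=0$ we get $v=0$ in $L^{2}(0,1)$. For surjectivity, given $\phi\in H^{2}(0,1)$ set $v:=\rho_{x}\phi$; then $v/\rho_{x}=\phi\in H^{2}(0,1)$, and since $|\rho_{x}|\le M$ and $H^{2}(0,1)\hookrightarrow L^{\infty}(0,1)$ in one dimension, $v$ is bounded, hence $v\in L^{2}(0,1)$ and $v\in D(B)$ with $Tv=\phi$. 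Along the way I would also record that $\|\cdot\|_{B}$ is a norm on $D(B)$: nonnegativity and homogeneity are visible from~\eqref{bnorm}, the triangle inequality holds because $\|\cdot\|_{B}$ is the norm of the inner product $\langle\cdot,\cdot\rangle_{B}$ of Proposition~\ref{inner} (equivalently, the pullback of $\langle\cdot,\cdot\rangle_{*}$ under $T$), and definiteness follows from the lower bound in~\eqref{equiv} together with $|A_{1}|=0$.

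For step (iii): a $\|\cdot\|_{B}$-Cauchy sequence $(v_{n})\subset D(B)$ has $(Tv_{n})$ Cauchy in $(H^{2}(0,1),\|\cdot\|_{*})$ by the isometry, hence convergent to some $\phi\in H^{2}(0,1)$ by step (i); putting $v:=\rho_{x}\phi\in D(B)$ via the surjectivity from step (ii) gives $\|v_{n}-v\|_{B}=\|Tv_{n}-\phi\|_{*}\to 0$, so $(D(B),\|\cdot\|_{B})$ is complete, and combined with the norm being generated by the inner product $\langle\cdot,\cdot\rangle_{B}$ it is a Hilbert space. The main — and essentially only — obstacle is the bookkeeping around the weight $\rho_{x}$: ensuring every division by $\rho_{x}$ is legitimate (which rests squarely on $\rho_{x}\neq 0$ a.e.) and checking that multiplying an $H^{2}$ function by the bounded factor $\rho_{x}$ returns an $L^{2}$ function (which uses the one-dimensional Sobolev embedding). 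Beyond this, no analysis is needed that is not already contained in the preceding proposition.
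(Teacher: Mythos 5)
Your proposal is correct and follows essentially the same route as the paper: both arguments transfer a $\|\cdot\|_B$-Cauchy sequence to $H^2(0,1)$ via the substitution $v\mapsto v/\rho_x$ and the norm equivalence~\eqref{equiv}, take the limit there by completeness of $H^2$, and pull it back by multiplying with $\rho_x$. Your version merely packages this as an explicit isometric bijection onto $H^2$ with an equivalent weighted inner product, and is somewhat more careful than the paper about the points where $\rho_x\neq 0$ a.e.\ and the boundedness of $\rho_x$ are actually used.
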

\begin{proof}
   We already know that $(D(B),\|\cdot\|_B)$ is a normed space with an inner product, so we only need to argue completeness. Let $\{v_n\}_{n=1}^{\infty}\subset D(B)$ be a Cauchy sequence. In particular, for every $\varepsilon>0$, there exists $n_0$ such that for every $n,m\geq n_0$ the following inequality holds $\|v_n-v_m\|_B\leq\varepsilon.$ By the first inequality of (\ref{equiv}), it follows that $\{\frac{v_n}{\rho_x}\}$ is a Cauchy sequence in $H^2.$ Since $H^2$ is complete, there exists a limit $w=\lim_{n\to\infty}\frac{v_n}{\rho_x}.$ To get a candidate for the limit in $D(B)$, we rewrite $w$ as $w=\frac{v}{\rho_x}$ where $v: = w\rho_x\in D(B)$. Using the second inequality of~(\ref{equiv}), we conclude that $\|v-v_n\|\leq \varepsilon$ for every positive $\varepsilon$ and $n$ large enough.
\end{proof}

With the $D(B)$ defined, we can characterize the range $R(B)$. We start by finding the dual space $D(B)^*.$ Let $\theta\in D(B)^*$ be a bounded linear functional. Since $(D(B),\|\cdot\|_B)$ is a Hilbert space, we can use the Riesz representation theorem to find a unique $w\in D(B)$ such that 
\begin{equation}\label{eq:B_range}
\theta(v)=\langle v,w \rangle_B=\int v\, Bw\, dx =\int\frac{v}{\rho_x}\, (\rho_xBw).
\end{equation}
Without loss of generality, we assume that $\|v\|_B = 1$. Since the linear functional $\theta$ is a bounded operator acting on $D(B)$ and $\frac{v}{\rho_x} \in H^2$, we obtain the requirement that $\rho_x B w \in H^{-2}$, as the integral takes the form of the standard $(H^2, H^{-2})$ duality pairing. Consequently, the dual of $D(B)$ is given by $D(B)^* = \{w \mid w \rho_x \in H^{-2}\}$.

It is straightforward to verify that for any $v \in D(B)$, $Bv \in D(B)^*$. Thus, $R(B) \subseteq D(B)^*$. On the other hand, for any $\theta \in D(B)^*$, Equation~\eqref{eq:B_range} implies $
    \theta(v) = \int v \, Bw \, dx$ for any $v\in D(B)$, 
which leads to $\theta = Bw \in R(B)$ based on the arbitrariness of $v$. As a result, we also have $D(B)^* \subseteq R(B)$, and we conclude that $D(B)^* = R(B)$.

\begin{proposition}
\label{embed}
If $\rho_x\in[0,M]$ then 
\begin{enumerate}
\item[(i)] $D(B)=\{w|  \|w\|_B<\infty\}$ continuously embeds into $L^2(0,1)$, and
\item[(ii)] $L^2(0,1)$ continuously embeds into $D(B)^*=\{w|w\rho_x\in H^{-2}\}$.
\end{enumerate}
\end{proposition}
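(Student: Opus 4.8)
The plan is to prove the two continuous embeddings separately, exploiting the norm formula~\eqref{bnorm} and the characterization $D(B)^* = \{w : w\rho_x \in H^{-2}\}$ established above. Throughout I will use that $|\rho_x| \le M$ pointwise.

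For part (i), I would start from the definition
\[
\|w\|^2_B = \int_0^1 \left(\varepsilon\left(\left(\tfrac{w}{\rho_x}\right)_{xx}\right)^2 + \lambda\left(\left(\tfrac{w}{\rho_x}\right)_x\right)^2 + \tfrac{w^2}{\rho_x^2}(\kappa+\rho_x^2)\right)dx.
\]
The last term dominates $\int w^2\,dx$ directly: since $\kappa + \rho_x^2 \ge \rho_x^2$, we have $\tfrac{w^2}{\rho_x^2}(\kappa+\rho_x^2) \ge w^2$, hence $\|w\|_{L^2}^2 \le \int \tfrac{w^2}{\rho_x^2}(\kappa+\rho_x^2)\,dx \le \|w\|_B^2$. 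This gives $\|w\|_{L^2} \le \|w\|_B$, which is the desired continuous embedding $D(B) \hookrightarrow L^2(0,1)$ with embedding constant $1$.

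For part (ii), I would argue by duality. Given $w \in L^2(0,1)$, I need to show $w\rho_x \in H^{-2}(0,1)$ with $\|w\rho_x\|_{H^{-2}} \lesssim \|w\|_{L^2}$. Testing against $\varphi \in H^2_0 \cap H^2$, we have $\langle w\rho_x, \varphi\rangle = \int w\,\rho_x\varphi\,dx$, and by Cauchy--Schwarz this is bounded by $\|w\|_{L^2}\|\rho_x\varphi\|_{L^2} \le M\|w\|_{L^2}\|\varphi\|_{L^2} \le M\|w\|_{L^2}\|\varphi\|_{H^2}$. Taking the supremum over $\|\varphi\|_{H^2} \le 1$ yields $\|w\rho_x\|_{H^{-2}} \le M\|w\|_{L^2}$, so $L^2(0,1) \hookrightarrow D(B)^*$ continuously. (Equivalently, since part (i) gives $D(B) \hookrightarrow L^2$ with $L^2$ reflexive and self-dual, dualizing the embedding immediately gives $L^2 = (L^2)^* \hookrightarrow D(B)^*$; the explicit test-function computation just makes the constant $M$ visible.)

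The only subtle point — and the main thing to be careful about rather than a genuine obstacle — is matching the pairing in~\eqref{eq:B_range} with the standard $(H^2, H^{-2})$ duality when $\rho_x$ may vanish on a set of positive or zero measure. I would note that the identification $D(B)^* = \{w : w\rho_x \in H^{-2}\}$ has already been derived in the text under the standing assumption that the relevant quotients lie in $H^2$, so here it suffices to invoke it; one should simply remark that division by $\rho_x$ is interpreted within the equivalence class structure of $D(B)$, and that the estimates above never actually divide by $\rho_x$ — they only use the upper bound $|\rho_x| \le M$ and the lower bound $\kappa + \rho_x^2 \ge \kappa$ or $\ge \rho_x^2$ — so no degeneracy issue arises.
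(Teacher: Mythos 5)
Your proposal is correct and follows essentially the same route as the paper: part (i) is read off directly from the norm formula~\eqref{bnorm} via the zeroth-order term (the paper uses the slightly sharper constant $(1+\kappa/M^2)^{1/2}$ by bounding $\kappa/\rho_x^2\geq\kappa/M^2$, while you settle for constant $1$), and part (ii) reduces in both cases to $\|w\rho_x\|_{H^{-2}}\leq\|w\rho_x\|_{L^2}\leq M\|w\|_{L^2}$, which the paper obtains via the Fourier multiplier bound and you obtain via the duality pairing — the same estimate in different notation.
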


\begin{proof}
    We begin by proving $(i)$. From (\ref{bnorm}) we see that 
    $$
    \|w\|_B^2\geq\int_0^1v^2\left(1+\frac{\kappa}{\rho_x^2}\right)dx\geq\left(1+\frac{\kappa}{M^2}\right)\|w\|_{L^2}^2.
    $$
    The proof of $(ii)$ is done similarly. Recall 
    $$
    \|w\rho_x\|_{H^{-2}}=\|(1+|\cdot|^2)^{-1}\mathcal{F}(w\rho_x)\|_{L^2}\leq\|\mathcal{F}(w\rho_x)\|_{L^2}=\|w\rho_x\|_{L^2}\leq M\|w\|_{L^2}.
    $$
\end{proof}

Proposition \ref{embed} allows us to continuously embed domain $D(B)$ first into $L^2$ and then into the codomain $D(B)^*.$ This makes $B:D(B)\subset L^2\subset D(B)^*\to D(B)^*$ a well defined operator. The inner product in $D(B)^*$ is defined in the usual way using Reisz representation theorem. In particular, let $v,w\in D(B)$. Using Riesz representation,  we can identify $v \in D(B)$ with its dual $v_D$ and compute the inner product 
$$
\langle v_D, Bw\rangle_{D^*}=\langle v,w\rangle_D=\langle v,Bw \rangle_{D,D^*},
$$
where $\langle \cdot\,,\, \cdot\rangle_{D}$, $\langle\cdot\, ,\, \cdot\rangle_{D^*}$ and $\langle \cdot\, ,\, \cdot \rangle_{D,D^*}$ represent the inner product in $D(B)$, the inner product in $D(B)^*$ and the duality pairing between $D(B)$ and $D(B)^*$, respectively.

\subsection{Spectrum of the Operator $B$}\label{subsec:B_spec}

Similarly to the Laplacian, which can be viewed as a map from either $H^{1}$ to $H^{-1}$ or a map from $H^{2}$ to $L^2$, we will focus on the case where $|\rho_x|$ is bounded from above, $D(B)=\{w \big| \frac{w}{\rho_x}\in H^{4}\}$ and consequently $R(B)\subseteq L^2$. 

Defining the domain and range in this way allows us to study the spectrum of the operator $B$. While we do not provide a complete description of the spectrum of $B$, we can produce bounds which are sufficient to show (local) convexity of the squared HV metric. 

Recall
$$
Bv=\frac{1}{\rho_x}\left(\varepsilon\left(\frac{v}{\rho_x}\right)_{xxxx}-\lambda\left(\frac{v}{\rho_x}\right)_{xx}+\frac{v}{\rho_x}\left(\kappa+\rho_x^2\right)\right).
$$
\begin{proposition}
\label{numradius}
    Let $|\rho_x|\in[0,M].$ Then $\inf\sigma(B)>1+\frac{\kappa}{M^2}.$ In particular $\sigma(B^{-1})\subset[0,\frac{M^2}{M^2+\kappa}]$ and the operator $I-B^{-1}$ is positive definite.
\end{proposition}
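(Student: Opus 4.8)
The plan is to reduce the whole statement to the quadratic-form identity \eqref{bnorm} for $\langle Bv,v\rangle=\|v\|_B^2$ and then invoke elementary spectral theory for self-adjoint operators. First I would record the one analytic inequality that does the work: starting from $\langle Bv,v\rangle=\int_0^1\varepsilon\bigl((v/\rho_x)_{xx}\bigr)^2+\lambda\bigl((v/\rho_x)_x\bigr)^2+\tfrac{v^2}{\rho_x^2}(\kappa+\rho_x^2)\,dx$, I discard the two non-negative terms carrying $\varepsilon$ and $\lambda$ and use $0\le\rho_x^2\le M^2$ to bound the remaining integrand pointwise by $\tfrac{v^2}{\rho_x^2}(\kappa+\rho_x^2)=v^2\bigl(1+\tfrac{\kappa}{\rho_x^2}\bigr)\ge\bigl(1+\tfrac{\kappa}{M^2}\bigr)v^2$ — valid wherever $\rho_x\neq0$, and trivially true on $\{\rho_x=0\}$ since $v$ vanishes there. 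Integrating gives $\langle Bv,v\rangle\ge\bigl(1+\tfrac{\kappa}{M^2}\bigr)\|v\|_{L^2}^2$ for every $v\in D(B)$, and since $B$ is self-adjoint (Proposition~\ref{inner}) this already forces $\sigma(B)\subseteq[1+\tfrac{\kappa}{M^2},\infty)$, i.e. $\inf\sigma(B)\ge1+\tfrac{\kappa}{M^2}$.

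Next I would deduce the two ``in particular'' claims by spectral calculus. Because $\inf\sigma(B)\ge1+\tfrac{\kappa}{M^2}>0$, the point $0$ belongs to the resolvent set of $B$, so the bounded self-adjoint operator $B^{-1}=M_{\rho_x}L^{-1}M_{\rho_x}$ (identified in the Remark after Proposition~\ref{inner}) satisfies $0\le B^{-1}\le\tfrac{1}{1+\kappa/M^2}I=\tfrac{M^2}{M^2+\kappa}I$ — by monotonicity of operator inversion, or equivalently by the spectral mapping theorem together with the unboundedness of $B$ (which puts $0$ into $\sigma(B^{-1})$). Hence $\sigma(B^{-1})\subseteq[0,\tfrac{M^2}{M^2+\kappa}]$, and therefore $\sigma(I-B^{-1})=1-\sigma(B^{-1})\subseteq[\tfrac{\kappa}{M^2+\kappa},1]$, which is bounded away from $0$; this gives $\langle(I-B^{-1})\theta,\theta\rangle\ge\tfrac{\kappa}{M^2+\kappa}\|\theta\|_{L^2}^2$, so $I-B^{-1}$ is positive definite. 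Note that these last two conclusions use only the non-strict bound on $\inf\sigma(B)$ and the fact that $\tfrac{M^2}{M^2+\kappa}<1$.

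It then remains to upgrade $\inf\sigma(B)\ge1+\tfrac{\kappa}{M^2}$ to the strict inequality. Here I would argue that the resolvent $B^{-1}=M_{\rho_x}L^{-1}M_{\rho_x}$ is compact: $L^{-1}$ maps $L^2$ continuously into $H^4(0,1)$ by elliptic regularity (the top-order part $\varepsilon\Delta^2$ has constant coefficients, $\kappa+\rho_x^2\in L^\infty$, and the hinged conditions $v=v_{xx}=0$ render $L$ invertible), the embedding $H^4(0,1)\hookrightarrow L^2(0,1)$ is compact by Rellich, and $M_{\rho_x}$ is bounded on $L^2$ because $|\rho_x|\le M$. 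Consequently $B$ has purely discrete spectrum and $\inf\sigma(B)$ is attained by some eigenfunction $\psi\neq0$; if it equalled $1+\tfrac{\kappa}{M^2}$, equality would hold in the first paragraph's estimate for $v=\psi$, forcing $\varepsilon\int_0^1\bigl((\psi/\rho_x)_{xx}\bigr)^2dx=0$ and $\int_0^1\bigl(\tfrac{\kappa}{\rho_x^2}-\tfrac{\kappa}{M^2}\bigr)\psi^2dx=0$. Since $\varepsilon>0$ the first makes $\psi/\rho_x$ affine, and the boundary conditions $\psi(0)=\psi(1)=0$ — combined, where $\rho_x$ vanishes, with the eigenvalue ODE satisfied by $\psi$ — then force $\psi/\rho_x\equiv0$, hence $\psi\equiv0$, a contradiction. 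Thus $\inf\sigma(B)>1+\tfrac{\kappa}{M^2}$.

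I expect this last step to be the only real obstacle: the quadratic-form bound is immediate, but extracting a strictly larger, and hence uniform, lower bound requires compactness of the resolvent, which brings in a little fourth-order elliptic theory, and the equality analysis needs some care at points where $\rho_x$ may vanish. If one is willing to settle for $\inf\sigma(B)\ge1+\tfrac{\kappa}{M^2}$, which already delivers $\sigma(B^{-1})\subseteq[0,\tfrac{M^2}{M^2+\kappa}]$ and the positive definiteness of $I-B^{-1}$, then none of the compactness machinery is needed.
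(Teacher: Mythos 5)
Your proposal is correct, and its core is the same as the paper's: bound the quadratic form $\langle Bv,v\rangle$ from below by $\bigl(1+\tfrac{\kappa}{M^2}\bigr)\|v\|_{L^2}^2$ using \eqref{bnorm} and $|\rho_x|\le M$, then use self-adjointness and spectral mapping to place $\sigma(B^{-1})$ in $[0,\tfrac{M^2}{M^2+\kappa}]$ and conclude positive definiteness of $I-B^{-1}$. Two points of genuine difference are worth noting. First, you correctly observe that the quadratic-form estimate only yields the non-strict bound $\inf\sigma(B)\ge 1+\tfrac{\kappa}{M^2}$; this is in fact all the paper's own proof establishes, even though the statement asserts strict inequality. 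Your upgrade via compactness of the resolvent (elliptic regularity for the fourth-order operator plus Rellich) and the equality analysis in the $\varepsilon$-term is a real addition that closes this gap — the only soft spot is the endpoint/degenerate case where $\rho_x$ vanishes, which you flag but do not fully resolve; as you also note, the strict inequality is not actually needed for the two ``in particular'' conclusions. Second, your interval $\sigma(I-B^{-1})\subseteq[\tfrac{\kappa}{M^2+\kappa},1]$ is the correct image of $[0,\tfrac{M^2}{M^2+\kappa}]$ under $x\mapsto 1-x$; the paper's proof contains a slip at this step, writing $[\tfrac{M^2}{M^2+\kappa},1]$ instead. So your argument is, if anything, more careful than the one in the paper.
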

\begin{proof}
 Let $v\in D(B)$. Since $B$ is essentially self-adjoint when thought of as an operator $B: D(B)\subset L^2\to L^2$, we can get a bound for the infimum of the spectrum,
    \begin{align*}
    &\inf\sigma(B)\\
    =&\inf_{v\in D,\|v\|_{L^2}=1}\langle v_,Bv\rangle_{D,D^*}
    \\=&\inf_{v\in D,\|v\|_{L^2}=1}\left(\varepsilon^2\left\|\left(\frac{v}{\rho_x}\right)_{xx}\right\|^2_{L^2}+\lambda^2\left\|\left(\frac{v}{\rho_x}\right)_x\right\|^2_{L^2}+\left\|v\sqrt{1+\frac{\kappa}{\rho_x^2}}\right\|^{2}_{L^2}\right)\\
    \geq& \left(1+\frac{\kappa}{M^2}\right)\inf_{v\in D,\|v\|_{L^2}=1}\|v\|^{2}_{L^2}\\
     \geq&1+\frac{\kappa}{M^2}.
  \end{align*}
   The spectrum of $B$ is contained in the interval $\bigg[1+\frac{\kappa}{M^2},\infty\bigg)$ and thus $$\sigma(B^{-1})\subset\bigg[0,\frac{1}{1+\frac{\kappa}{M^2}}\bigg]=\bigg[0,\frac{M^2}{M^2+\kappa}\bigg].$$
    To find bounds for the spectrum of $I-B^{-1}$, we use the fact that for commuting operators $A,B$ on a Banach space the spectrum of the sum is contained in the sum of the spectra:
    $$
    \sigma(I-B^{-1})\subset\sigma(I)-\sigma(B^{-1})=1-\bigg[0,\frac{M^2}{\kappa +M^2}\bigg]=\bigg[\frac{M^2}{M^2+\kappa},1\bigg].
    $$ Since $I-B^{-1}$ is self-adjoint and has a positive spectrum, it is positive definite.
\end{proof} 

We present a stronger result of Proposition \ref{numradius} under additional assumption on $\rho_x.$ 

\begin{proposition}
\label{loewner}
Let $\preceq$ represent the Loewner ordering of operators. For $\theta\in L^2(0,1)$, we have the following inequalities:
\begin{enumerate}
        \item[(i)] If $\|\rho_x\|_{H^2(0,1)}<\infty$ and $0<m\leq|\rho_x|\leq M,$ almost everywhere, then there exist constants $C_1,C_2>0$ such that 
        \begin{align}
           & \kappa \left\|(C_1M^2\varepsilon\Delta^2-C_1M^2\lambda \Delta+I(\kappa+M^2))^{-\frac{1}{2}}\theta \right\|^2_{L^{2}} \nonumber  \\ 
           \leq&\bigg\langle\hat{\theta},\left(\frac{C_1(\varepsilon\langle\xi\rangle^4+\lambda\langle\xi\rangle^2)+\frac{\kappa}{M^2}}{C_1(\varepsilon\langle\xi\rangle^4+\lambda\langle\xi\rangle^2)+1+\frac{\kappa}{M^2}}\right)\hat{\theta}\bigg\rangle_{L^2} \nonumber \\
           \leq&\langle\theta,(I-B^{-1})\theta\rangle\leq\bigg \langle\hat{\theta},\left(\frac{C_2(\varepsilon\langle\xi\rangle^4+\lambda\langle\xi\rangle^2)+\frac{\kappa}{m^2}}{C_2(\varepsilon\langle\xi\rangle^4+\lambda\langle\xi\rangle^2)+1+\frac{\kappa}{m^2}}\right)\hat{\theta}\bigg\rangle_{L^2},\label{eq:i}
    \end{align}
where $\hat{\theta}$ is the Fourier transform of $\theta$.
        \item[(ii)] If $\|\rho_x\|_{H^1(0,1)}<\infty$ and $0<m\leq|\rho_x|\leq M,$ almost everywhere, then there exists $C_3>0$ such that 
\begin{align}
& \kappa \left\|\left(\frac{M^2\varepsilon}{C_3}\Delta+I(\kappa+M^2)\right)^{-\frac{1}{2}}\theta\right\|^2_{L^2} \nonumber \\  \leq & \,\langle\theta,(I-B^{-1})\theta \rangle \leq 
\|\theta\|^2_{L^2}-\|(\varepsilon\Delta^2-\lambda \Delta+I(\kappa+M^2))^{-\frac{1}{2}}(\theta\rho_x) \|_{L^2}^2. \label{eq:ii}
\end{align}
        
\item[(iii)] If $\|\rho_x\|_{L^2(0,1)}<\infty$ and $0\leq m\leq|\rho_x|\leq M,$ almost everywhere, then
     \begin{align}
      &\|\theta\|^2_{L^2}-\|(\varepsilon\Delta^2-\lambda \Delta+I(\kappa+m^2))^{-\frac{1}{2}}(\theta\rho_x) \|_{L^2} \nonumber \\ 
    \leq&\langle\theta,(I-B^{-1})\theta \rangle \nonumber \\
      \leq& \label{eq:iii} \|\theta\|^2_{L^2}-\|(\varepsilon\Delta^2-\lambda \Delta+I(\kappa+M^2))^{-\frac{1}{2}}(\theta\rho_x) \|_{L^2}\,.  
    \end{align}

\end{enumerate} 
\end{proposition}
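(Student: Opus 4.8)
The plan is to estimate the quadratic form $\langle\theta,(I-B^{-1})\theta\rangle$ by sandwiching the operator $B$ (equivalently $L_\rho$) between constant-coefficient Fourier multipliers, and then to use monotonicity of the scalar map $t\mapsto \frac{t}{1+t}$ (or $t\mapsto 1-\frac{1}{1+t}$) to transfer the operator inequalities through the inversion $B\mapsto B^{-1}$ and the shift $B^{-1}\mapsto I-B^{-1}$. The starting point is the identity already recorded in the text, $B^{-1}=M_{\rho_x}L_\rho^{-1}M_{\rho_x}$ with $L_\rho=\varepsilon\Delta^2-\lambda\Delta+I(\kappa+\rho_x^2)$, so that
\[
\langle\theta,(I-B^{-1})\theta\rangle=\|\theta\|_{L^2}^2-\langle\theta\rho_x,\,L_\rho^{-1}(\theta\rho_x)\rangle_{L^2}.
\]
Part (iii) is then immediate: since $0\le m\le|\rho_x|\le M$ a.e., one has the Loewner bounds $\varepsilon\Delta^2-\lambda\Delta+I(\kappa+m^2)\preceq L_\rho\preceq \varepsilon\Delta^2-\lambda\Delta+I(\kappa+M^2)$ as positive self-adjoint operators, hence the reverse inequalities for the inverses, and substituting into the displayed identity gives \eqref{eq:iii}. (The constant-coefficient operators are diagonalized by the Fourier transform, which is how the $\|(\cdots)^{-1/2}(\theta\rho_x)\|_{L^2}$ expressions arise.)

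For parts (i) and (ii) the extra regularity $\rho_x\in H^2$ resp. $\rho_x\in H^1$ is used to further bound $L_\rho^{-1}$ from below, i.e.\ to get a \emph{lower} bound on $I-B^{-1}$ that is itself a Fourier multiplier rather than the $\rho_x$-weighted expression appearing on the right. The mechanism I would use: commuting a derivative past multiplication by $\rho_x$ costs a term controlled by $\rho_{xx}$ (one derivative, controlled by $\|\rho_x\|_{H^1}$) or by $\rho_{xxx}$ (two derivatives, controlled by $\|\rho_x\|_{H^2}$), so that $M_{\rho_x}^{-1}L_\rho M_{\rho_x}^{-1}=B$ can be compared with a constant-coefficient fourth-order operator $C_1(\varepsilon\Delta^2-\lambda\Delta)+I(\kappa/M^2+1)$ up to the multiplicative constants $C_1,C_2,C_3$ that absorb the Sobolev norms of $\rho_x$. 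Writing this in Fourier variables with $\langle\xi\rangle^2=1+|\xi|^2$, one gets $c_1\bigl(C_1(\varepsilon\langle\xi\rangle^4+\lambda\langle\xi\rangle^2)+1+\tfrac{\kappa}{M^2}\bigr)\preceq B\preceq c_2\bigl(C_2(\varepsilon\langle\xi\rangle^4+\lambda\langle\xi\rangle^2)+1+\tfrac{\kappa}{m^2}\bigr)$; inverting and applying $t\mapsto 1-t$ (which reverses Loewner order) then yields the two-sided multiplier bounds in \eqref{eq:i}, and the leftmost inequality there is just the statement that the multiplier symbol $\tfrac{A}{A+1}$ with $A=C_1(\varepsilon\langle\xi\rangle^4+\lambda\langle\xi\rangle^2)+\tfrac{\kappa}{M^2}$ dominates $\tfrac{\kappa}{M^2}\cdot\tfrac{1}{A+1}$, recognized as $\kappa\|(C_1M^2\varepsilon\Delta^2-C_1M^2\lambda\Delta+I(\kappa+M^2))^{-1/2}\theta\|_{L^2}^2$ after clearing the $M^2$. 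Part (ii) is the analogous but coarser computation under only $\rho_x\in H^1$: the upper bound is kept in the $\rho_x$-weighted form (as in (iii) with $M$), while the lower bound degrades to a second-order multiplier $\tfrac{M^2\varepsilon}{C_3}\Delta+I(\kappa+M^2)$ because with only one derivative of control one can no longer retain the full $\Delta^2$ term in the comparison.

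The main obstacle will be the commutator estimates that let one pass from the $\rho_x$-weighted operator $M_{\rho_x}L_\rho^{-1}M_{\rho_x}$ to a genuine constant-coefficient Fourier multiplier with explicit constants $C_1,C_2,C_3$ — in other words, making precise how the Sobolev norm of $\rho_x$ enters. The cleanest route is to work with $B=M_{1/\rho_x}L_\rho M_{1/\rho_x}$ directly and estimate $\|(v/\rho_x)^{(k)}\|_{L^2}$ against $\|v^{(k)}\|_{L^2}$ plus lower-order terms for $k=1,2$ via the product rule, since $1/\rho_x$ is bounded below by $1/M$ and its derivatives are controlled by those of $\rho_x$ together with the lower bound $m\le|\rho_x|$ (this is exactly why parts (i)–(ii) require $m>0$ whereas (iii) allows $m=0$). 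Once the scalar symbol comparison $c_1 p(\xi)\le \text{(symbol of }B)\le c_2 p(\xi)$ with $p(\xi)=C(\varepsilon\langle\xi\rangle^4+\lambda\langle\xi\rangle^2)+1+\kappa/M^2$ is in hand, the remaining steps — inverting, forming $I-B^{-1}$, and reading off \eqref{eq:i}–\eqref{eq:iii} — are routine monotonicity arguments for self-adjoint operators and I would not belabor them.
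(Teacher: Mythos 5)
Your proposal follows essentially the same route as the paper: write $B^{-1}=M_{\rho_x}L_\rho^{-1}M_{\rho_x}$, sandwich $B$ (equivalently $L_\rho$) between constant-coefficient operators in the Loewner order, use operator anti-monotonicity of inversion, and read off the resulting multipliers in Fourier variables. Part (iii) is verbatim the paper's argument, and for part (i) your ``commutator/product rule'' estimates are exactly what the paper packages as the statement that multiplication by $\rho_x$ (and hence by $1/\rho_x$, using $m\le|\rho_x|$) is bounded and bijective on $H^2(0,1)$, citing a Sobolev multiplication theorem; one cosmetic caveat is that $B$ is variable-coefficient and has no Fourier symbol, so the comparison must be phrased as a quadratic-form inequality $B_M\preceq B\preceq B_m$ rather than a pointwise symbol bound, which is clearly what you intend.

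The one place your sketch has a real gap is the lower bound in part (ii). Saying the bound ``degrades to a second-order multiplier because one can no longer retain the full $\Delta^2$ term'' does not explain how $\varepsilon$ ends up as the coefficient of $\Delta$ in $\frac{M^2\varepsilon}{C_3}\Delta+I(\kappa+M^2)$. If you simply discard the fourth-order term $\varepsilon\|(\psi/\rho_x)_{xx}\|_{L^2}^2$, the surviving lower bound carries only the $\lambda$-term, which is vacuous in the admissible case $\lambda=0$. The paper's mechanism is to first apply the Poincar\'e--Wirtinger inequality, $\frac{1}{C_3}\|(\psi/\rho_x)_x\|_{L^2}^2\le\|(\psi/\rho_x)_{xx}\|_{L^2}^2$, to trade the second derivative for the first, and only then use $H^1$-boundedness of multiplication by $\rho_x$ to reach $\|\psi_x\|_{L^2}^2$; this is how the $\varepsilon$ migrates onto the Laplacian. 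With that step supplied, the rest of your argument goes through as you describe.
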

\begin{proof}
Let $\psi\in H^2(0,1).$ 
We begin by proving \textit{(i)}.  Since $\rho_x\in H^2(0,1)$ and $|\rho_x|\geq m,$ we use \cite[Thm.~7.4]{behzadan2021multiplication} to say that the map $M_{\rho_x}(f)=f\rho_x$ is a bijective bounded linear map from $H^2(0,1)$ back to itself. As a consequence, the multiplication by $\frac{1}{\rho_x}$ is also bounded and there exist constants $C_1$ and $C_2$ such that 
\begin{align*}
\varepsilon\left\|\left(\frac{\psi}{\rho_x}\right)_{xx}\right\|^2_{L^2}+\lambda\left\|\left(\frac{\psi}{\rho_x}\right)_{x}\right\|^2_{L^2} &\leq C_2\left(\varepsilon\|\psi_{xx}\|^2_{L^2}+\lambda\|\psi_{x}\|^2_{L^2}\right)\,, \\
C_1\left(\varepsilon\|\psi_{xx}\|^2_{L^2}+\lambda\|\psi_{x}\|^2_{L^2}\right)&\leq\varepsilon\left\|\left(\frac{\psi}{\rho_x}\right)_{xx}\right\|^2_{L^2}+\lambda\left\|\left(\frac{\psi}{\rho_x}\right)_{x}\right\|^2_{L^2}.
\end{align*}
By sufficiently redefining  $C_1,C_2$, we can bound the value of the paring $\langle\psi,B\psi\rangle$ by
\begin{align}
    &{C_1}\left(\varepsilon\|\psi_{xx}\|^2_{L^2}+\lambda\|\psi_x\|^2_{L^2}\right)+\left(1+\frac{\kappa}{M^2}\right)\|\psi\|_{L^2}^2  \nonumber \\
    \leq&\langle\psi,B\psi\rangle \nonumber \\ \leq& {C_2}\left(\varepsilon\|\psi_{xx}\|^2_{L^2}+\lambda\|\psi_{x}\|^2_{L^2}\right)+\left(1+\frac{\kappa}{m^2}\right)\|\psi\|_{L^2}^2. \label{eq:B_ineq}
\end{align}
Denote by $B_m$ the operator where $$B_mv={C_2}\left(\varepsilon v_{xxxx}-\lambda v_{xx}\right)+{v}\left(1+\frac{\kappa}{m^2}\right)\,,\quad  \forall v\,,$$ and by $B_M$ the operator where
$$B_Mv={C_1}\left(\varepsilon v_{xxxx}-\lambda v_{xx}\right)+{v}\left(1+\frac{\kappa}{M^2}\right)\,,\quad  \forall v.$$
Using operators $B_m$ and $B_M$ we can rewrite the relation in~\eqref{eq:B_ineq} as 
$$
B_M\preceq B\preceq B_m.
$$ Similarly, we get 
$$
I-B_M^{-1}\preceq I-B^{-1}\preceq I-B_m^{-1}.
$$Using the Fourier transform, we can bound $\langle\theta,(I-B^{-1})\theta\rangle$ in the frequency domain and obtain~\eqref{eq:i}. \medskip
We move to the proof of \textit{(ii)}. Since $\rho_x$ no longer belongs to $H^2$, we cannot guarantee a bound of the form $\|(\frac{\psi}{\rho_x})_{xx}\|_{L^2}\leq C\|\psi\|_{L^2}^2$. Therefore, getting an upper bound as in part \textit{(i)} would not be possible without further assumptions. Recall $$L=\varepsilon\Delta^2-\lambda\Delta+I(\kappa+\rho_x^2).$$ Similarly, as before, we get the following relation
$$
 I-M_{\rho_x}L^{-1}_{\rho}M_{\rho_x}\preceq I-M_{\rho_x}L^{-1}_MM_{\rho_x}
$$ where 
$$
L_M=\varepsilon\Delta^2-\lambda\Delta+I(\kappa+M^2).
$$ 
After computing $\langle\theta,(I-\rho_xL^{-1}_M\rho_x)\theta\rangle$, we get 
$$
\langle\theta,(I-M_{\rho_x}L^{-1}_MM_{\rho_x})\theta\rangle=\|\theta\|^2_{L^2}-\|(\varepsilon\Delta^2-\lambda \Delta+I(\kappa+M^2))^{-\frac{1}{2}}(\theta\rho_x) \|_{L^2}^2.
$$
For the lower bound, we use the Poincar\'e--Wirtinger inequality \cite{attouch2014variational}  to conclude that there exists a constant $C_3$ such that 
$$
\frac{1}{C_3}\left\|\left(\frac{\psi}{\rho_x}\right)_{x}\right\|^2_{L^2}\leq \left\| \left(\frac{\psi}{\rho_x} \right)_{xx}\right\|^2_{L^2}.
$$  Furthermore, we can use the fact that multiplication with $\rho_x$ is bounded in $H^1(0,1)$ \cite[Thm.~7.4]{behzadan2021multiplication}
to get 
$$
\frac{\varepsilon}{\tilde C_3}\|\psi_x\|_{L^2}^2\leq\frac{\varepsilon}{C_3}\left\|\left(\frac{\psi}{\rho_x}\right)_{xx}\right\|^2_{L^2}.
$$ By appropriately adjusting the constant $C_3$, we get 
\begin{align*}
    \langle\psi,B\psi\rangle\geq \left(\lambda+\frac{\varepsilon}{C_3}\right)\|\psi_x\|^2_{L^2}+\left(1+\frac{\kappa}{M^2}\right)\|\psi\|_{L^2}^2 =: \langle \psi \,, H_M \psi \rangle,
\end{align*}
where the operator $H_M$ is defined such that $H_M v =  -\left(\lambda+\frac{\varepsilon}{C_3}\right) v_{xx}  + \left(1+\frac{\kappa}{M^2}\right) v$.
Using the Fourier transform, we can compute
$$
\langle\theta, (I-H_M^{-1})\theta \rangle=\left\langle\hat \theta,\frac{(\frac{\varepsilon}{C_3}+\lambda)\xi^2+\frac{\kappa}{M^2}}{(\frac{\varepsilon}{C_3}+\lambda)\xi^2+1+\frac{\kappa}{M^2}}\hat\theta\right\rangle\geq\kappa \left\|\left(\frac{M^2\varepsilon}{C_3}\Delta+I(M^2+{\kappa})\right)^{-\frac{1}{2}}\theta\right\|^2_{L^2}.
$$ Putting everything together, we obtain the chain of inequality~\eqref{eq:ii}. \medskip

For the last case \textit{(iii)}, since we do not have any bounds other than on the norm of $\rho_x$, we can only use an idea similar to the upper bound proof for part \textit{(ii)}. This gives us the chain of inequalities
$$
  I-M_{\rho_x}L^{-1}_{m}M_{\rho_x}\preceq I-M_{\rho_x}L^{-1}M_{\rho_x}\preceq I-M_{\rho_x}L^{-1}_MM_{\rho_x},
$$ 
where 
$$
L_m=\varepsilon\Delta^2-\lambda\Delta+I(\kappa+m^2).
$$ 
As a result, we get~\eqref{eq:iii}.
\end{proof}

\subsection{Spectral properties of the Hessian Operator}
In this subsection, we aim to describe the effect of hyperparameters $\kappa,\lambda$, and $\varepsilon$ on the local convexity of the squared HV metric through the analysis of the Hessian operator given in~\eqref{eq:Binverse}. 

Proposition~\ref{loewner} suggests that if $\rho_x\in H^2(0,1)$, then it is possible to get the chain of 
The denominator of the last term in the inequality~\eqref{eq:i} closely resembles the form of the inverse Laplacian. This leads us to define a new norm 
\begin{equation}
\label{normequiv}
    \|\theta\|_{H_{\kappa,\lambda,\varepsilon, M}^{2s}(0,1)}: =\|(\varepsilon\Delta^2-\lambda\Delta+I(\kappa+M^2))^{\frac{s}{2}}\theta\|_{L^2(0,1)}\,,
\end{equation}
which is equivalent to the usual Sobolev $H^{2s}(0,1)$ norm.
The next two propositions connect the HV metric and the $L^2$ norm. 
\begin{proposition}If $\theta,\rho\in H^1(0,1),$ then
$d_{\text{HV}}(\rho+s\theta,\rho)$ locally equals to the $L^2$ norm of $s\|\theta\|_{L^2}$ for any choice of hyperparameters, i.e.,
\[
\lim_{s \to 0}    \frac{d_{\text{HV}}^2(\rho,\rho+s\theta)}{s^2} = \|\theta\|^2_{L^2}\qquad \forall \kappa,\lambda,\varepsilon\,,
\]
if and only if $\rho_x=0.$ 
\end{proposition}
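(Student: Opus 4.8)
The plan is to read the limit straight off the second-order expansion~\eqref{Hs} and then isolate the single term that distinguishes the HV metric from $L^2$. Since $\theta,\rho\in H^1(0,1)$, both $\rho$ and $\rho+s\theta$ lie in $H^1(0,1)$, so minimizing geodesics exist and the linearization around $\rho_0=\rho_1=\rho$ carried out in Section~\ref{hessprop} applies; and since the HV metric is symmetric (reverse time $t\mapsto 1-t$, flip the signs of $v$ and $z$), $d_{\text{HV}}^2(\rho,\rho+s\theta)=d_{\text{HV}}^2(\rho+s\theta,\rho)$. Then Lemma~\ref{lemma-hess} together with~\eqref{Hs} and the identity $z_1=M_{\rho_x}L_\rho^{-1}M_{\rho_x}\theta-\theta$ from~\eqref{z1}, where $L_\rho=\varepsilon\Delta^2-\lambda\Delta+I(\kappa+\rho_x^2)$ with $v=v_{xx}=0$ at $x\in\{0,1\}$, gives
\[
\lim_{s\to0}\frac{d_{\text{HV}}^2(\rho,\rho+s\theta)}{s^2}=\tfrac12\bigl\langle\theta,\,(I-M_{\rho_x}L_\rho^{-1}M_{\rho_x})\theta\bigr\rangle_{L^2}=\tfrac12\|\theta\|_{L^2}^2-\tfrac12\bigl\langle M_{\rho_x}\theta,\,L_\rho^{-1}M_{\rho_x}\theta\bigr\rangle_{L^2}.
\]
(As a check, the path $f(x,t)=\rho(x)+ts\theta(x)$, $v\equiv0$, $z=s\theta$ is admissible with action exactly $\tfrac{s^2}{2}\|\theta\|_{L^2}^2$, so the limit can never exceed this $L^2$ value.) Up to the $\tfrac12$ normalization of~\eqref{action}, the limit therefore equals $\|\theta\|_{L^2}^2$ — the ``locally $L^2$'' identity — for a given $\theta$ and given admissible $(\kappa,\lambda,\varepsilon)$ precisely when the correction term $\langle M_{\rho_x}\theta,\,L_\rho^{-1}M_{\rho_x}\theta\rangle_{L^2}$ vanishes.

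The second step is to show that this correction term is a genuine squared norm that collapses exactly when $\rho_x\theta=0$. Integrating by parts, $L_\rho$ is governed by the bilinear form $a(u,w)=\varepsilon\langle u_{xx},w_{xx}\rangle+\lambda\langle u_x,w_x\rangle+\langle(\kappa+\rho_x^2)u,w\rangle$ on $H^2(0,1)\cap H^1_0(0,1)$ (the same positivity structure as in Proposition~\ref{inner}), which is bounded — using $H^2(0,1)\hookrightarrow C[0,1]$ and $\rho_x^2\in L^1(0,1)$ — and coercive, since $a(v,v)\ge\varepsilon\|v_{xx}\|_{L^2}^2+\kappa\|v\|_{L^2}^2$ and $\|v_x\|_{L^2}^2\le\|v\|_{L^2}\|v_{xx}\|_{L^2}$ for $v\in H^1_0(0,1)$; this needs only $\kappa,\varepsilon>0$ and $\lambda\ge0$. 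Because $M_{\rho_x}\theta=\rho_x\theta\in L^2(0,1)$ (since $\theta\in H^1(0,1)\hookrightarrow L^\infty(0,1)$ and $\rho_x\in L^2(0,1)$), Lax--Milgram yields a unique $v_1=L_\rho^{-1}M_{\rho_x}\theta\in H^2(0,1)\cap H^1_0(0,1)$ with $a(v_1,\phi)=\langle\rho_x\theta,\phi\rangle$ for every $\phi$, so that
\[
\langle M_{\rho_x}\theta,\,L_\rho^{-1}M_{\rho_x}\theta\rangle_{L^2}=a(v_1,v_1)\ge\varepsilon\|(v_1)_{xx}\|_{L^2}^2+\kappa\|v_1\|_{L^2}^2\ge0,
\]
with equality iff $v_1=0$, which by the weak equation forces $\langle\rho_x\theta,\phi\rangle=0$ for all $\phi$, i.e.\ $\rho_x\theta=0$ a.e.; conversely $\rho_x\theta=0$ a.e.\ makes the functional, hence $v_1$, hence the term, vanish. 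This equivalence does not depend on the admissible $(\kappa,\lambda,\varepsilon)$ chosen.

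These two steps settle both implications. If $\rho_x=0$ a.e.\ then $M_{\rho_x}\theta=0$, the correction term vanishes for every $(\kappa,\lambda,\varepsilon)$, and the limit equals $\|\theta\|_{L^2}^2$, so the ``locally $L^2$'' property holds. Conversely, if the limit equals $\|\theta\|_{L^2}^2$ for every direction $\theta\in H^1(0,1)$ (for all admissible hyperparameters, as in the statement), then the correction term vanishes for all such $\theta$, hence $\rho_x\theta=0$ a.e.\ for every $\theta\in H^1(0,1)$, and taking $\theta\equiv1$ yields $\rho_x=0$ a.e. The one delicate point is the low regularity of $\rho$: with $\rho\in H^1(0,1)$ one only has $\rho_x\in L^2$ and $\rho_x^2\in L^1$, so $L_\rho$ must be treated through its coercive bilinear form on $H^2\cap H^1_0$ rather than pointwise, and one has to check that $\rho_x\theta\in L^2$ so that $v_1=L_\rho^{-1}M_{\rho_x}\theta$ is well defined and the representation $\langle M_{\rho_x}\theta,L_\rho^{-1}M_{\rho_x}\theta\rangle=a(v_1,v_1)$ is legitimate; with that in place the rest is routine. (A purely cosmetic matter is reconciling the $\tfrac12$ of~\eqref{action} with the $\|\theta\|_{L^2}^2$ in the statement — the same convention already used when the $L^2$ Hessian is identified with $I$ at the start of Section~\ref{hessprop}.)
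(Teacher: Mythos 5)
Your proof is correct and follows essentially the same route as the paper's: both read the limit off the second-order expansion \eqref{Hs} and reduce the ``only if'' direction to the positive-definiteness of the correction term $\langle \rho_x\theta,\,L_\rho^{-1}(\rho_x\theta)\rangle$, with arbitrariness of $\theta$ forcing $\rho_x=0$. The only differences are minor: you establish the ``if'' direction through the Hessian expansion rather than the paper's direct computation of the geodesic ($v=0$, $z=s\theta$ after reducing to $\rho=0$), and you supply the Lax--Milgram/coercivity details that the paper leaves implicit; the factor-of-$\tfrac12$ mismatch with the normalization in \eqref{action} that you flag is present in the paper's own proof as well.
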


\begin{proof}
We first assume that $\rho_x=0.$ Due to \cite[Prop.~2.1]{han2024hv} we can assume that $\rho=0$. From the optimality conditions, we see that $v=0,$ and $z(x,t)=s\theta(x) $ and 
$$
d_{\text{HV}}^2(\rho, \rho + s\theta)=s^2\int_0^1\int_0^1\theta(x)dxdt=s^2\|\theta\|^2_{L^2}.
$$
The other direction follows from the local expansion of the metric. In particular, because  
$$
\lim_{s\to0}\frac{d_{\text{HV}}^2(\rho,\rho+s\theta)}{s^2}=\|\theta\|_{L^2}^2-\int_0^1(\theta\rho_x)L^{-1}(\rho_x\theta) dx=\|\theta\|^2_{L^2}\quad \forall\theta,
$$ it must hold that for all $\theta$ that
$$
\int_0^1(\theta\rho_x)L^{-1}(\rho_x\theta) dx=0\,,
$$ 
which only happens if $\rho_x=0.$ 
\end{proof}

Although the HV metric coincides with the $L^{2}$ norm only in the local regime where $\rho_{x}=0$, one can still approximate $L^{2}$ behavior by selecting suitable values for the parameters $\varepsilon$, $\lambda$, and $\kappa$.

\begin{proposition}
\label{L2kappa}
If $\rho,\theta\in H^1(0,1)$, then the following asymptotic relations hold
\begin{enumerate}
    \item For fixed $\lambda\geq0$ and $\varepsilon>0$ we have $$
\lim_{\kappa\to\infty}\lim_{s\to 0}\frac{d_{\text{HV}}^2(\rho,\rho+s\theta)}{s^2}=\|\theta\|^2_{L^2}.
$$\item For fixed $\kappa,\varepsilon>0$ we have$$
\lim_{\lambda\to\infty}\lim_{s\to 0}\frac{d_{\text{HV}}^2(\rho,\rho+s\theta)}{s^2}=\|\theta\|^2_{L^2}.
$$\item For fixed $\lambda\geq0$ and $\kappa>0$ we have$$
\lim_{\varepsilon\to\infty}\lim_{s\to 0}\frac{d_{\text{HV}}^2(\rho,\rho+s\theta)}{s^2}=\|\theta\|^2_{L^2}.
$$
\end{enumerate}

\end{proposition}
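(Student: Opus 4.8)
The plan is to deduce all three asymptotics from a single quantitative bound on the correction term in the local expansion of the squared HV metric. Recall from the local second-order expansion (equation~\eqref{Hs}, cf.\ the proof of the preceding proposition) that linearizing around $\rho_0=\rho_1=\rho$ gives
\[
\lim_{s\to 0}\frac{d_{\text{HV}}^2(\rho,\rho+s\theta)}{s^2}=\|\theta\|_{L^2}^2-\big\langle \rho_x\theta,\,L_\rho^{-1}(\rho_x\theta)\big\rangle_{L^2},\qquad L_\rho=\varepsilon\Delta^2-\lambda\Delta+I(\kappa+\rho_x^2),
\]
with $L_\rho$ carrying the boundary conditions $v=v_{xx}=0$ at $x\in\{0,1\}$. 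Since $\theta\in H^1(0,1)\hookrightarrow L^\infty(0,1)$ and $\rho_x\in L^2(0,1)$, the product $\phi:=\rho_x\theta$ lies in $L^2(0,1)$ with $\|\phi\|_{L^2}\le \|\theta\|_{L^\infty}\|\rho_x\|_{L^2}$, a bound that does \emph{not} depend on $\kappa,\lambda,\varepsilon$. Hence it suffices to prove $\langle \phi,L_\rho^{-1}\phi\rangle\to 0$ in each of the three regimes; the claimed identities follow at once.

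For the quantitative step I would set $u:=L_\rho^{-1}\phi\in D(L_\rho)$ and test the defining equation $\varepsilon u_{xxxx}-\lambda u_{xx}+(\kappa+\rho_x^2)u=\phi$ against $u$. Integrating by parts twice and using that the boundary terms vanish (thanks to $u=u_{xx}=0$ at the endpoints) yields the energy identity
\[
\langle\phi,u\rangle=\varepsilon\|u_{xx}\|_{L^2}^2+\lambda\|u_x\|_{L^2}^2+\int_0^1(\kappa+\rho_x^2)\,u^2\,dx\;\ge\;\varepsilon\|u_{xx}\|_{L^2}^2+\lambda\|u_x\|_{L^2}^2+\kappa\|u\|_{L^2}^2 .
\]
Combining this with $\langle\phi,u\rangle\le\|\phi\|_{L^2}\|u\|_{L^2}$ and the appropriate Poincar\'e inequality then controls $\langle\phi,u\rangle$. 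For part~(1) I simply keep the term $\kappa\|u\|_{L^2}^2$, which gives $\|u\|_{L^2}\le\|\phi\|_{L^2}/\kappa$ and hence $\langle\phi,u\rangle\le\|\phi\|_{L^2}^2/\kappa\to 0$. For part~(2) I keep $\lambda\|u_x\|_{L^2}^2$ and use $\|u\|_{L^2}\le C_P\|u_x\|_{L^2}$ (valid because $u(0)=u(1)=0$), obtaining $\langle\phi,u\rangle\le C_P^2\|\phi\|_{L^2}^2/\lambda\to 0$. For part~(3) I keep $\varepsilon\|u_{xx}\|_{L^2}^2$ and use a bound of the form $\|u\|_{L^2}\le C\|u_{xx}\|_{L^2}$, giving $\langle\phi,u\rangle\le C^2\|\phi\|_{L^2}^2/\varepsilon\to 0$. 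All constants here depend only on the interval $(0,1)$, so the limits are genuinely uniform and the argument closes.

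The step requiring the most care is the inequality $\|u\|_{L^2}\le C\|u_{xx}\|_{L^2}$ in part~(3): the second boundary condition $u_{xx}=0$ gives no direct control of $u_x$ at the endpoints, so only the Dirichlet condition $u(0)=u(1)=0$ is genuinely usable. The remedy is a two-step chain: by Rolle's theorem applied to $u$ there is $\xi\in(0,1)$ with $u_x(\xi)=0$, so $|u_x(x)|=\big|\int_\xi^x u_{xx}\,\mathrm{d}y\big|\le\|u_{xx}\|_{L^1}\le\|u_{xx}\|_{L^2}$ for every $x$, whence $\|u_x\|_{L^2}\le\|u_{xx}\|_{L^2}$; composing with the ordinary Poincar\'e inequality $\|u\|_{L^2}\le C_P\|u_x\|_{L^2}$ yields the claim with $C=C_P$. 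With this in place, each of the three estimates shows the correction term vanishes, so $d_{\text{HV}}^2(\rho,\rho+s\theta)/s^2\to\|\theta\|_{L^2}^2$ in every case. (Equivalently, one may phrase this spectrally: the energy identity gives $\inf\sigma(L_\rho)\ge\kappa$, $\ge\kappa+\lambda/C_P^2$, and $\ge\kappa+\varepsilon/C_P^2$ respectively, so $\|L_\rho^{-1}\|_{L^2\to L^2}\to 0$ in each limit.)
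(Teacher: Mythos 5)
Your proof is correct, and it reaches the conclusion by a genuinely different route than the paper. Both arguments start from the same local expansion \eqref{Hs}, i.e.\ $\lim_{s\to0}d_{\text{HV}}^2(\rho,\rho+s\theta)/s^2=\|\theta\|_{L^2}^2-\langle\rho_x\theta,L_\rho^{-1}(\rho_x\theta)\rangle$, and both reduce the claim to showing that the correction term vanishes. The paper then passes through Proposition~\ref{loewner}\,(iii) to sandwich the Hessian between the constant-coefficient operators $I-M_{\rho_x}L_m^{-1}M_{\rho_x}$ and $I-M_{\rho_x}L_M^{-1}M_{\rho_x}$, and kills the correction term $\|(\varepsilon\Delta^2-\lambda\Delta+\kappa I)^{-1/2}(\theta\rho_x)\|_{L^2}^2=\int|\widehat{\theta\rho_x}(\xi)|^2/(\varepsilon\xi^4+\lambda\xi^2+\kappa)\,d\xi$ by Plancherel and dominated convergence; this gives convergence but no rate. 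You instead test $L_\rho u=\rho_x\theta$ against $u$ to get the energy identity $\langle\phi,u\rangle=\varepsilon\|u_{xx}\|^2+\lambda\|u_x\|^2+\int(\kappa+\rho_x^2)u^2$, and combine it with Cauchy--Schwarz and a Poincar\'e-type chain (your Rolle argument for $\|u_x\|_{L^2}\le\|u_{xx}\|_{L^2}$ is the right way to handle the fact that the Navier conditions $u=u_{xx}=0$ give no endpoint control of $u_x$). This buys you explicit decay rates $O(1/\kappa)$, $O(1/\lambda)$, $O(1/\varepsilon)$ with constants depending only on $\|\theta\|_{L^\infty}\|\rho_x\|_{L^2}$ and the Poincar\'e constant of $(0,1)$, it treats all three limits uniformly in one computation, and it works directly with the boundary-value problem on $(0,1)$ rather than with the whole-line Fourier multiplier, which is arguably cleaner given the Navier boundary conditions. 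The only caveat is the overall factor of $\tfrac12$ between \eqref{Hs} and the normalization used in the proposition, but that discrepancy is already present in the paper's own statements and does not originate with your argument.
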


\begin{proof}

Property $(iii)$ in Proposition \ref{loewner} implies the following upper and lower bounds on the spectrum 
\begin{equation}
    \label{eqkle}
\|\theta\|^2_{L^2}-\|(\varepsilon\Delta^2-\lambda \Delta+\kappa I)^{-\frac{1}{2}}(\theta\rho_x) \|_{L^2}\leq\lim_{s\to 0}\frac{d_{\text{HV}}^2(\rho,\rho+s\theta)}{s^2}\leq\|\theta\|^2_{L^2}.
\end{equation} We will show that 
$$
\lim_{\kappa \to\infty}\|(\varepsilon\Delta^2-\lambda \Delta+I\kappa)^{-\frac{1}{2}}(\theta\rho_x)\|_{L^2}^2=0.
$$ Since $\theta\rho_x\in L^2(0,1)$, we use the Plancherel identity to get 
$$
\lim_{\kappa \to\infty}\|(\varepsilon\Delta^2-\lambda \Delta+I\kappa)^{-\frac{1}{2}}(\theta\rho_x)\|_{L^2}^2=\lim_{\kappa \to\infty}\int_{\R}\frac{ |\widehat{\theta\rho_x}(\xi)|^2}{\varepsilon\xi^4+\lambda\xi^2+\kappa} d\xi =0,
$$ where the last limit follows from the dominated convergence theorem. By letting $\kappa\rightarrow \infty$ in \eqref{eqkle}, we have shown claim 1. The other two claims are proven similarly.
\end{proof}
So far, we have only discussed the case when the hyperparameters go to infinity. We now focus on the other extreme when the hyperparameters $\varepsilon,\lambda$ and $\kappa$ approach $0$. This is our main result of this section.
\begin{theorem}
\label{main1}
Let $\rho_x \in H^2(0,1)$ satisfy $0 < m \leq |\rho_x| \leq M$ almost everywhere, and let $\theta \in L^2(0,1)$. If there exists $T > 0$ such that $\supp(\mathcal{F}(\theta)) \subset [-T, T]$, where $\mathcal{F}$ denotes the Fourier transform operator, then there exist constants $\tilde{C}_1, \tilde{C}_2 > 0$ and hyperparameters $\kappa_0, \lambda_0, \varepsilon_0$ such that for all $\kappa \in (m^2, \kappa_0)$, $\lambda \leq \lambda_0$, and $\varepsilon \leq \varepsilon_0$, the following inequality holds:
$$
0<\tilde C_1\|\theta\|_{H^{-2}}^2\leq\langle\theta,(I-B^{-1})\theta \rangle\leq \tilde C_2\|\theta\|^2_{H^{-2}},
$$
 where $\kappa_0,\lambda_0$ and $\varepsilon_0$ depend only on $T.$
\end{theorem}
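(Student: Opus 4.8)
The plan is to read the two‑sided bound off Proposition~\ref{loewner}(i), after using the band‑limitedness of $\theta$ to turn the $H^{-2}$ comparison into an $L^2$ comparison. Since $\supp(\mathcal{F}\theta)\subset[-T,T]$, Plancherel gives $\|\theta\|_{H^{-2}(0,1)}^2\le\|\theta\|_{L^2(0,1)}^2\le(1+T^2)^2\,\|\theta\|_{H^{-2}(0,1)}^2$, the symbol $(1+\xi^2)^{-2}$ of the $H^{-2}$ norm being pinched between $(1+T^2)^{-2}$ and $1$ on that interval. So it suffices to produce $c_1,c_2>0$, independent of the admissible $(\kappa,\lambda,\varepsilon)$, with $c_1\|\theta\|_{L^2}^2\le\langle\theta,(I-B^{-1})\theta\rangle\le c_2\|\theta\|_{L^2}^2$, and then set $\tilde C_1=c_1$ and $\tilde C_2=c_2(1+T^2)^2$. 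One caveat worth spelling out: the ``Fourier transform'' and the symbols appearing in~\eqref{eq:i} and~\eqref{normequiv} refer to the spectral calculus of $\varepsilon\Delta^2-\lambda\Delta+I(\kappa+M^2)$ under the boundary conditions $v=v_{xx}=0$ carried by $B$, so ``band‑limited'' should be read in that eigenbasis, which on $(0,1)$ plays the role of the Fourier basis, and the pinching estimate above is to be phrased in those terms.

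I would then invoke Proposition~\ref{loewner}(i). Because $\rho_x\in H^2(0,1)$ and $0<m\le|\rho_x|\le M$, it sandwiches $\langle\theta,(I-B^{-1})\theta\rangle$ between the two multiplier quadratic forms of~\eqref{eq:i}, whose symbols have the shape $\tfrac{u+c}{u+1+c}$ with $u=C_i\bigl(\varepsilon\langle\xi\rangle^4+\lambda\langle\xi\rangle^2\bigr)\ge0$ and $c\in\{\kappa/M^2,\kappa/m^2\}$. On $\supp(\mathcal{F}\theta)$ the quantity $\varepsilon\langle\xi\rangle^4+\lambda\langle\xi\rangle^2$ is bounded by a number depending only on $T$ that vanishes as $\varepsilon,\lambda\to0$, so choosing $\varepsilon_0,\lambda_0$ small enough (in terms of $T$ alone) confines $u$ to a bounded interval and, on the band, freezes the symbols near constants; this is the quantitative form of the statement that, restricted to a fixed band, the HV Hessian is comparable to the identity and hence to an $H^{-2}$‑type form. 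Concretely, since $t\mapsto\tfrac{t+c}{t+1+c}$ is increasing, the lower symbol is at least $\tfrac{\kappa/M^2}{1+\kappa/M^2}=\tfrac{\kappa}{M^2+\kappa}>\tfrac{m^2}{M^2+m^2}$ on the support — this is where the hypothesis $\kappa>m^2$ is used — while the upper symbol is always $<1$. Hence $c_1=\tfrac{m^2}{M^2+m^2}$ and $c_2=1$ work, with $\kappa_0$ any fixed finite cap (a finite $\kappa_0$ being what keeps the Hessian in the $H^{-2}$, equivalently $L^2$, regime rather than letting it degenerate), so that $\kappa_0,\lambda_0,\varepsilon_0$ depend on $T$ alone. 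Combined with the first paragraph, this yields $0<\tfrac{m^2}{M^2+m^2}\|\theta\|_{H^{-2}}^2\le\langle\theta,(I-B^{-1})\theta\rangle\le(1+T^2)^2\|\theta\|_{H^{-2}}^2$. As a shortcut, the lower inequality alone is immediate from $I-B^{-1}\succeq\tfrac{\kappa}{M^2+\kappa}I$ in Proposition~\ref{numradius}; the band‑limited reduction is what is genuinely needed to close the upper inequality.

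The main obstacle is not any single estimate but the uniformity of constants. One must check that the comparison constants $C_1,C_2$ delivered by the multiplication theorem~\cite[Thm.~7.4]{behzadan2021multiplication} — used in Proposition~\ref{loewner}(i) to trade $\|(\psi/\rho_x)_{xx}\|_{L^2}$ for $\|\psi_{xx}\|_{L^2}$ and back — are positive and depend only on $\|\rho_x\|_{H^2}$, $m$, and $M$, and in particular not on $(\kappa,\lambda,\varepsilon)$; only then can $\kappa_0,\lambda_0,\varepsilon_0$ be pinned to $T$ alone. A second point requiring care is the identification used implicitly throughout: ``$\supp(\mathcal{F}\theta)\subset[-T,T]$'' and all the symbol‑pinching must be made rigorous in the eigenbasis of the fourth‑order operator carrying $B$'s boundary conditions rather than the Fourier basis of the whole line; on the bounded interval $(0,1)$ this is routine but should be spelled out. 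Everything else reduces to the monotonicity of $t\mapsto\tfrac{t+c}{t+1+c}$ and the elementary bookkeeping sketched above.
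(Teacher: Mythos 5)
Your proof is correct, but it takes a genuinely different and far more elementary route than the paper's. The paper proves the upper bound by passing from Proposition~\ref{loewner}(i) to the comparison operator $B_m$, splitting $B_m=D+I$ with $D=C_2(\varepsilon\partial_x^4-\lambda\partial_x^2)+\tfrac{\kappa}{m^2}I$, expanding $B_m^{-1}$ in an alternating Neumann series (valid precisely because $\kappa>m^2$ makes $\|D^{-1}\|<1$), truncating to $\tilde D=I-D^{-1}+D^{-2}-D^{-3}+D^{-4}$, and then carrying out a delicate comparison of the Fourier multipliers $\psi$ and $\tilde\psi=1-\psi+\psi^2-\psi^3+\psi^4$ — locating the real root $r_1$ of $1-2x+x^2-x^3+x^4$ and choosing $\lambda_0,\varepsilon_0$ so that the crossing frequency $\xi_0$ lies outside $[-T,T]$. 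This yields a sandwich between two explicit fourth-order multiplier operators, from which the $H^{-2}$ equivalence is read off via~\eqref{normequiv}. You instead observe that on the band $[-T,T]$ the $L^2$ and $H^{-2}$ norms are equivalent with constants depending only on $T$, and that $\tfrac{\kappa}{M^2+\kappa}I\preceq I-B^{-1}\preceq I$ follows already from Proposition~\ref{numradius} together with positivity of $B^{-1}$; with $\kappa>m^2$ this gives the uniform constants $\tilde C_1=\tfrac{m^2}{M^2+m^2}$ and $\tilde C_2=(1+T^2)^2$ without any restriction on $\lambda$ or $\varepsilon$. Both arguments are valid, and yours exposes something worth recording: the theorem \emph{as literally stated} is satisfied by any operator pinched between positive multiples of the identity, because band-limitedness collapses the distinction between $L^2$ and $H^{-2}$. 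What the paper's longer argument buys is the stronger, frequency-resolved statement — that within the band the Hessian quadratic form is controlled above and below by genuine fourth-order multipliers whose symbols decay like $\xi^{-4}$ — which is the substantive ``$H^{-2}$-like'' behavior the section is after but which the theorem's conclusion does not capture; your $\tilde C_2=(1+T^2)^2$ grows with the bandwidth, whereas the paper's multiplier bounds are meant to track the high-frequency damping inside the band. Your two caveats are both well placed and are shared by the paper itself: the constants $C_1,C_2$ in Proposition~\ref{loewner}(i) come from the $H^2$ multiplication bounds and indeed depend only on $\rho_x$, $m$, $M$; and the hypothesis $\supp(\mathcal F\theta)\subset[-T,T]$ for $\theta\in L^2(0,1)$ is incompatible with the whole-line Fourier transform by Paley--Wiener, so it must be read in a discrete eigenbasis on $(0,1)$ — a gap in the statement you inherit rather than introduce.
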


\begin{proof}
Due to $(i)$ in Proposition \ref{loewner}, we get the following bounds for the Hessian operator $I-B^{-1}$,
    \begin{align}
    \label{eq:14}
    \kappa\|(C_1M^2\varepsilon\Delta^2-C_1M^2\lambda (\Delta+(\kappa+M^2)I))^{-\frac{1}{2}}\theta \|^2_{L^{2}}&\leq\langle\theta,(I-B^{-1})\theta \rangle \leq\langle\theta,(I-B_m^{-1})\theta\rangle,
\end{align}
where $$B_mv={C_2}\left(\varepsilon v_{xxxx}-\lambda v_{xx}\right)+{v}\left(1+\frac{\kappa}{m^2}\right).$$

We split $B_m$ into a sum of an operator $D$ and the identity operator $I$:
$$
B_m=\left({C_2}\left(\varepsilon\frac{d^4}{dx^4}-\lambda\frac{d^2}{dx^2}\right)+\frac{\kappa}{m^2} I \right)+I=D+I\,.
$$ 
Since when $\kappa >m^2$, the norm of $\|D^{-1}\|_{L^2\to L^2}<1$, we can expand the operator $$B_m^{-1}=D^{-1}(I+D^{-1})^{-1}$$ into an absolutely convergent Neumann series 
$$
D^{-1}(I+D^{-1})^{-1}=D^{-1}(I-D^{-1}+D^{-2}-\cdots)=D^{-1}-D^{-2}+D^{-3}-\cdots.
$$Since this is an alternating series, we can use its expansion to get the following relation:
$$
I-B^{-1}\preceq I-D^{-1}+D^{-2}-D^{-3}+\cdots\preceq I-D^{-1}+D^{-2}-D^{-3}+D^{-4}=: \tilde D
$$ 

Observe that the operator $D$ and its inverse are both diagonalizable in the Fourier domain. We denote by $\psi(\xi)=\frac{1}{{C_2}(\varepsilon\xi^4+\lambda\xi^2)+\frac{\kappa}{m^2}}$ the Fourier multiplier of $D^{-1}$ 
and compare it with $\tilde\psi(\xi):=1-\psi(\xi)+\psi^2(\xi)-\psi^3(\xi)+\psi^4(\xi)$, the Fourier multiplier of $\tilde D$. Since the polynomial $1-x^2+x^3+x^4$ is even and convex, it has a unique minimum $y_0<1$. If we choose $\kappa>m^2$ such that 

\begin{equation}
\label{eq:15}
    {y_0}\leq\frac{1-(\frac{m^2}{\kappa})^4}{1+\frac{m^2}{\kappa}}\leq\frac {m^2} {\kappa},
\end{equation} 
then $$\tilde\psi''(0)=\frac{2C_2\lambda m^4}{\kappa^2}\left(1-\frac{2m^2}{\kappa}+\frac{3m^4}{\kappa^2}-\frac{4m^6}{\kappa^3}\right)<0\text{ \quad and \quad } \psi''(0)=-\frac{2C_2\lambda m^4}{\kappa^2}.$$ The function $g(\kappa)=\left(1-\frac{2m^2}{\kappa}+\frac{3m^4}{\kappa^2}-\frac{4m^6}{\kappa^3}\right)$ is strictly increasing on the interval $[m^2,\frac{m^2}{y_0}]$ 
with the minimum achieved at $\kappa=m^2,$ with the value of $-2,$ and the maximum achieved at $\kappa=y_0,$ with the value of $0.$ Since the function $g$ is continuous, there exists $\kappa_0,$ such that $\forall \kappa\in(m^2,\kappa_0]: g(\kappa)<-1.$ Comparing $\tilde \psi''(0)$ with $\psi''(0)$ for $m^2< \kappa\leq\kappa_0$, we have
 
\begin{equation}
\label{eq:16}
    \tilde\psi''(0)<\psi''(0)<0,
\end{equation} 
which suggests that, at least in the neighborhood of $0,$ $\tilde\psi$ is more concave than $\psi.$ 

We are now ready to complete the proof for the upper bound. Note that \eqref{eq:15} and \eqref{eq:16} suggest that around $\xi=0$ , $\tilde\psi$ can be bounded above by $\psi$.  On the other hand, as $\xi\rightarrow \infty$, $\psi\rightarrow 0$ while $\tilde\psi\rightarrow 1$. Our goal will be to classify all the points $\xi,$ such that $\tilde\psi(\xi)=\psi(\xi).$ That is, we wish to find the roots of the equation $$
1-2\psi(\xi)+\psi^2(\xi)-\psi^3(\xi)+\psi^4(\xi)=0.
$$ 
The left-hand side function is the composition of $h(x)=1-2x+x^2-x^3+x^4,$ and the multiplier $\psi(\xi).$ The function $h(x)$ has two real roots, $1$ and $r_1$, and two complex roots. When $\kappa=m^2$, the solution to $\psi(\xi)=1$ occurs at $\xi=0$. When $\kappa>m^2$, $\psi(\xi)=1$ has $4$ imaginary and $0$ real solutions, so we do not consider the real root $1$ of $h(x)$.

Using the other real root $r_1$ of $h(x)$, we can solve the equation $\psi(\xi)=r_1$ to compute the frequencies at which $\psi(\xi)=\tilde\psi(\xi)$. Since $\psi(\xi)$ is monotonically decreasing and even, we denote the two real solutions to $\psi(\xi)=r_1$ as $\xi_{0}$ and $\xi_1$ where $\xi_1 = - \xi_0 <0 $.

By choosing $\lambda_0,\varepsilon_0>0$ small enough such that $\xi_0\geq T$,  we get the following inequality for all $m^2<\kappa\leq\kappa_0$, $\lambda\leq\lambda_0$,
$\varepsilon\leq\varepsilon_0$:

\begin{align*}
    \kappa\|(C_1M^2\varepsilon\Delta^2-C_1M^2\lambda (\Delta+(\kappa+M^2)I))^{-\frac{1}{2}}\theta \|^2_{L^{2}}&\leq \langle\theta,(I-B^{-1})\theta \rangle\\&\leq\left\| \left(C_2(\varepsilon\Delta^2-\lambda\Delta)+\frac{\kappa}{m^2}I\right)^{-\frac{1}{2}}\theta\right\|^2_{L^2},
\end{align*}
where the left-hand side is due to~\eqref{eq:14} and the right-hand side is due to~\eqref{eq:15},~\eqref{eq:16} and the fact that $\psi(\xi)\geq\tilde\psi(\xi)$ on the interval $[-T,T]$. 

Using the definition~\eqref{normequiv} for the case $s=-1$ and its equivalence to the standard Sobolev norm $H^{-2}(0,1)$, we have
  \begin{align*}
  \tilde C_1\|\theta\|_{H^{-2}}^2\leq\langle\theta,(I-B^{-1})\theta \rangle\leq \tilde  C_2\|\theta\|^2_{H^{-2}}\,.
\end{align*}
 \end{proof}
\begin{remark}
The proof of Theorem~\ref{main1} reveals insights that go well beyond the statement itself. By expanding the operator $B^{-1}$ into a Neumann series and approximating it with a finite number of terms, we see that for small enough $\kappa_0$, the Fourier transform of $\tilde D$  will have a distinct bell shape around $\xi=0$, comparable to that of $-\Delta ^2.$ The width of the bell shape can be controlled by $\varepsilon $ and $\kappa$.
\end{remark}

Theorem~\ref{main1} shows that the linearization of the squared HV metric is bounded above and below by constant multiples of the $H^{-2}$ norm, provided the input signal has a compactly supported Fourier transform and the hyperparameters are chosen sufficiently small. The parameter $\varepsilon$ weights the biharmonic operator $\Delta^{2}$ in the expression $I-B^{-1}$ and is therefore responsible for the $H^{-2}$-like behavior. If $\varepsilon$ were set to zero, this term would vanish, the factor $\lambda\Delta$ would dominate, and the metric would exhibit $H^{-1}$-type behavior instead. Although $\varepsilon$ cannot be exactly zero, the next corollary shows that the HV metric is expected to exhibit $H^{-1}$-type behavior as $\varepsilon\to 0$.

\begin{corollary}
\label{corollaryH1}
Under the same assumptions as Theorem~\ref{main1}, we have 
   $$
    \tilde C_1\|\theta\|_{H^{-1}}^2\leq\lim_{\varepsilon\to 0}\langle\theta,(I-B^{-1})\theta \rangle\leq \tilde C_2\|\theta\|^2_{H^{-1}}\,. 
    $$
\end{corollary}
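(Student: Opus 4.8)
The plan is to pass to the limit $\varepsilon\to0^{+}$ — with $\kappa\in(m^{2},\kappa_{0})$ and $\lambda\in(0,\lambda_{0}]$ held fixed, the positivity of $\lambda$ being what produces $H^{-1}$ rather than $L^{2}$ behaviour — inside the two–sided estimate obtained at the very end of the proof of Theorem~\ref{main1}. Writing $B_{\varepsilon}$ for the operator $B$ of~\eqref{eq:B} to exhibit its $\varepsilon$–dependence, and using Plancherel to render both bracketing quantities in the Fourier domain, that estimate reads, for every $\varepsilon\in(0,\varepsilon_{0}]$,
\[
\kappa\!\int_{\R}\!\frac{|\hat\theta(\xi)|^{2}\,d\xi}{C_{1}M^{2}\varepsilon\xi^{4}+C_{1}M^{2}\lambda\xi^{2}+\kappa+M^{2}}\;\le\;\langle\theta,(I-B_{\varepsilon}^{-1})\theta\rangle\;\le\;\int_{\R}\!\frac{|\hat\theta(\xi)|^{2}\,d\xi}{C_{2}\varepsilon\xi^{4}+C_{2}\lambda\xi^{2}+\kappa/m^{2}}\,.
\]
Here the outer bounds come from Proposition~\ref{loewner}(i) and the Neumann–series comparison of Theorem~\ref{main1}; that the upper bound survives for \emph{all} $\varepsilon\le\varepsilon_{0}$, not merely the endpoint value, follows because shrinking $\varepsilon$ only enlarges the symbol $\psi$, pushing the crossing frequency $\xi_{0}$ outward so that $\psi\ge\tilde\psi$ on $[-T,T]$ is preserved.

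First I would check that the middle term actually converges as $\varepsilon\to0^{+}$; the squeeze above does not by itself supply this, since its two bounds tend to distinct limits. By Proposition~\ref{inner} the quadratic form $\langle v,B_{\varepsilon}v\rangle=\int_{0}^{1}\varepsilon\bigl((v/\rho_{x})_{xx}\bigr)^{2}+\lambda\bigl((v/\rho_{x})_{x}\bigr)^{2}+\tfrac{v^{2}}{\rho_{x}^{2}}(\kappa+\rho_{x}^{2})\,dx$ depends on $\varepsilon$ only through the non-negative summand $\varepsilon\int_{0}^{1}((v/\rho_{x})_{xx})^{2}\,dx$, so $\varepsilon\mapsto B_{\varepsilon}$ is non-decreasing in the form order on a common, $\varepsilon$–independent form domain; hence $\varepsilon\mapsto B_{\varepsilon}^{-1}$ is non-increasing and $\varepsilon\mapsto\langle\theta,(I-B_{\varepsilon}^{-1})\theta\rangle$ is non-decreasing. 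Being also bounded below — by $0$, indeed by $\tfrac{\kappa}{M^{2}+\kappa}\|\theta\|_{L^{2}}^{2}$ thanks to Proposition~\ref{numradius} — this monotone quantity has a limit as $\varepsilon\to0^{+}$.

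Next I would let $\varepsilon\to0^{+}$ in the two outer integrals. Since $\hat\theta\in L^{2}(\R)$ (compactly supported, in fact), each integrand is dominated, uniformly in $\varepsilon\in(0,\varepsilon_{0}]$, by a constant multiple of $|\hat\theta|^{2}\in L^{1}(\R)$, so dominated convergence gives the limits $\kappa\int_{\R}\frac{|\hat\theta|^{2}\,d\xi}{C_{1}M^{2}\lambda\xi^{2}+\kappa+M^{2}}$ and $\int_{\R}\frac{|\hat\theta|^{2}\,d\xi}{C_{2}\lambda\xi^{2}+\kappa/m^{2}}$. Both limiting symbols are comparable to $(1+\xi^{2})^{-1}$ up to constants: each of $\xi\mapsto\tfrac{\kappa(1+\xi^{2})}{C_{1}M^{2}\lambda\xi^{2}+\kappa+M^{2}}$ and $\xi\mapsto\tfrac{1+\xi^{2}}{C_{2}\lambda\xi^{2}+\kappa/m^{2}}$ is monotone in $\xi^{2}$ and hence takes values in a bounded interval bounded away from $0$, which yields positive constants $\tilde C_{1},\tilde C_{2}$ with $\tfrac{\tilde C_{1}}{1+\xi^{2}}\le\tfrac{\kappa}{C_{1}M^{2}\lambda\xi^{2}+\kappa+M^{2}}$ and $\tfrac{1}{C_{2}\lambda\xi^{2}+\kappa/m^{2}}\le\tfrac{\tilde C_{2}}{1+\xi^{2}}$ for every $\xi$. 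Integrating against $|\hat\theta|^{2}$ and recalling $\int_{\R}\frac{|\hat\theta|^{2}\,d\xi}{1+\xi^{2}}=\|\theta\|_{H^{-1}}^{2}$, then combining with the previous paragraph, gives $\tilde C_{1}\|\theta\|_{H^{-1}}^{2}\le\lim_{\varepsilon\to0}\langle\theta,(I-B^{-1})\theta\rangle\le\tilde C_{2}\|\theta\|_{H^{-1}}^{2}$, the strict positivity $0<\tilde C_{1}\|\theta\|_{H^{-1}}^{2}$ being automatic whenever $\theta\not\equiv0$.

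The step I expect to be the main obstacle is the convergence argument of the second paragraph: it is the one point that is not a formal limit in the inequalities, and it rests on the monotonicity of $\varepsilon\mapsto\langle\theta,(I-B_{\varepsilon}^{-1})\theta\rangle$; alternatively one could argue directly that the singular perturbation $B_{\varepsilon}\to B_{0}$ (the order dropping from four to two) is resolvent-convergent, which is slightly more delicate. A secondary, more routine loose end is confirming that the Neumann–series upper bound from Theorem~\ref{main1}, proved there at the endpoint hyperparameters, remains valid throughout $\varepsilon\in(0,\varepsilon_{0}]$.
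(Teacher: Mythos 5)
Your proposal is correct and follows essentially the same route as the paper: squeeze $\langle\theta,(I-B^{-1})\theta\rangle$ between the two Fourier-side bounds from Proposition~\ref{loewner}(i) and Theorem~\ref{main1}, pass $\varepsilon\to0$ inside the integrals by dominated convergence, and identify the limiting symbols with $(1+\xi^2)^{-1}$ up to constants. The one genuine addition is your monotonicity argument (via the form order of $\varepsilon\mapsto B_\varepsilon$) establishing that the limit actually exists — a point the paper's proof silently assumes — and this is a worthwhile tightening rather than a different approach.
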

\begin{proof}

Due to Proposition~\ref{loewner} and Theorem~\ref{main1}, we have the following bounds 
\begin{align*}
\kappa\lim_{\varepsilon\to 0}\int_{R}\frac{|\hat\theta(\xi)|^2}{C_1M^2\varepsilon\xi^4+C_1M^2\lambda\xi^2+\frac{\kappa}{M^2}}d\xi
\leq & \lim_{\varepsilon\to 0}\langle\theta,(I-B^{-1})\theta \rangle\\
\leq & \lim_{\varepsilon\to 0}\int_{R}\frac{|\hat\theta(\xi)|^2}{\frac{1}{C_2}\left(\varepsilon\xi^4+\lambda\xi^2\right)+\frac{\kappa}{m^2}}d\xi.
\end{align*}
Since $\theta \in H^1(0,1)$,

we can use Lebesgue's dominated convergence theorem to pass the limit inside the integral, from which the corollary follows.
\end{proof}

We end this section by summarizing Proposition~\ref{L2kappa} and Theorem~\ref{main1} and providing numerical examples describing the local limiting behavior of the squared HV metric based on the choice of hyperparameters $\kappa,\lambda$, and $\varepsilon.$

Proposition~\ref{L2kappa} shows that enlarging any of the hyperparameters $\kappa$, $\lambda$, or $\varepsilon$ pushes the HV metric toward local $L^{2}$ behavior, whereas Theorem~\ref{main1} indicates that small values of these parameters yield a more transport‐like local structure. These results align with our intuition for how the HV metric should behave. Recall that the squared HV distance is defined through the following action functional.
\begin{equation*}
   A_{\kappa, \lambda, \varepsilon}(f,v,z) =\frac12 \int_0^1 \int_0^1   \kappa v^2 + \lambda v_x^2  + \varepsilon v_{xx}^2 + z^2    \, d x d t.
\end{equation*}
When the velocity‐penalizing hyperparameter $\kappa$ is large, the cost of transport via the velocity field $v$ dominates, so the optimal path typically sets $v\approx 0$ and pays instead for the vertical deformation carried by $z$.  Conversely, when $\kappa$, $\lambda$, and $\varepsilon$ are all small, horizontal deformations become relatively inexpensive, making transport through $v$ more advantageous than vertical adjustments.

We illustrate Proposition~\ref{L2kappa}, Theorem~\ref{main1} and Corollary \ref{corollaryH1} numerically. We regard a pair of two Ricker wavelet signals as $g_s(t)$ and $f(t)$ where $g_s(t)=f(t-s)$ as shown in Fig.~\ref{rickershift}.
\begin{figure}[ht]
    \centering
    \includegraphics[width=0.5\linewidth]{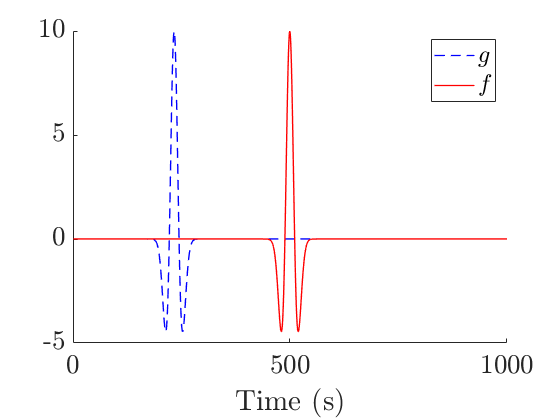}
    \caption{Original signal $f$ and shifted signal $g_s.$}
    \label{rickershift}
\end{figure}
We examine the curve
\[
J(s)\;=\;d_{\mathrm{HV}}^{2}\bigl(g_{s},f\bigr)
\]
as a function of the translation parameter $s$.

In Fig.~\ref{convexpara}, we analyze three representative hyperparameter regimes for the HV metric and compare each to the squared $L^{2}$ distance between $f$ and $g_{s}$ (see Fig.~\ref{fig:convex1}):
\begin{enumerate}
    \item \textbf{$L^2$ regime.}  
          Setting $\kappa = 10^{10}$ with $\lambda = \varepsilon = 1$ illustrates the $L^{2}$‐like behavior predicted by Proposition~\ref{L2kappa}; see Fig.~\ref{fig:convex1} and Fig.~\ref{fig:convex2}.
          
    \item \textbf{$H^{-2}$ regime.}  
          Choosing all three hyperparameters small but with $\varepsilon \gg \lambda$ produces the $H^{-2}$‐type behavior indicated by Theorem~\ref{main1}; see Fig.~\ref{fig:convex3}.
          
    \item \textbf{$H^{-1}$ regime.}  
          Keeping the hyperparameters small while reversing the hierarchy to $\lambda \gg \varepsilon$ yields the $H^{-1}$‐like behavior described in Corollary~\ref{corollaryH1}; see Fig.~\ref{fig:convex4}.
\end{enumerate}
When $s$ is large, we observe a constant behavior in Fig.~\ref{fig:convex3} and Fig.~\ref{fig:convex4}. This occurs because, when the original signal $f$ and the shifted signal $g_s$ are sufficiently far apart, the cost of modifying $v$ in the action functional \eqref{action} to align the two signals becomes greater than the cost of adjusting $z$. As a result, vertical shifts become the more cost-effective option. The transition point where this behavior shifts happens much later in Fig.~\ref{fig:convex3} compared to Fig.~\ref{fig:convex4}, due to the smaller weight assigned to the velocity and its derivatives in the action functional.

\begin{figure}[!ht]
    \centering
    \subfloat[$L^2$ misfit]{\includegraphics[width=0.4\linewidth]{  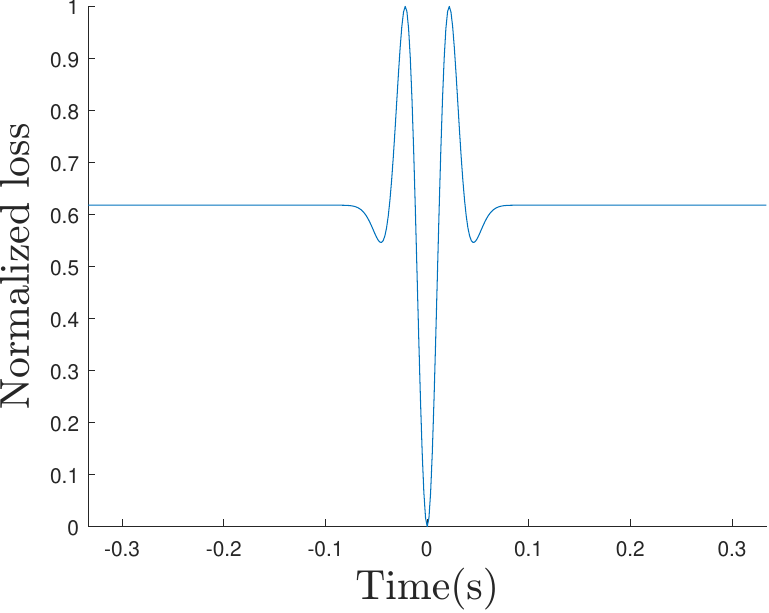}\label{fig:convex1}}
    \hspace{0.05\linewidth}
    \subfloat[HV misfit in the $L^2$ regime]{\includegraphics[width=0.4\linewidth]{  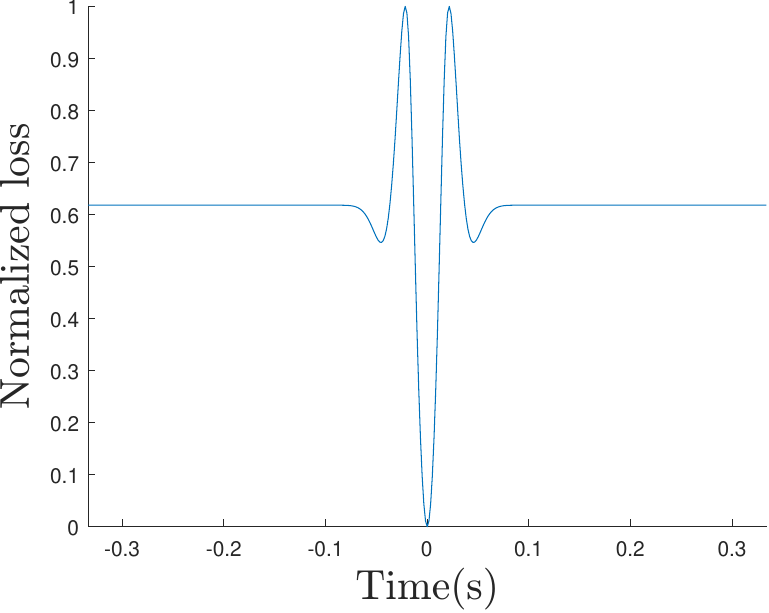}\label{fig:convex2}}\\
    \subfloat[HV misfit in the $H^{-2}$ regime]{\includegraphics[width=0.4\linewidth]{  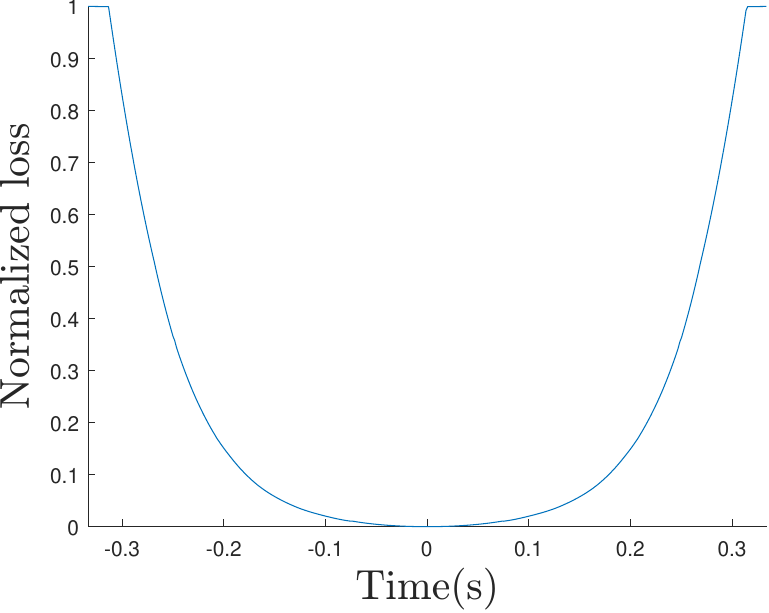}\label{fig:convex3}}
    \hspace{0.05\linewidth}
    \subfloat[HV misfit in the $H^{-1}$ regime]{\includegraphics[width=0.4\linewidth]{  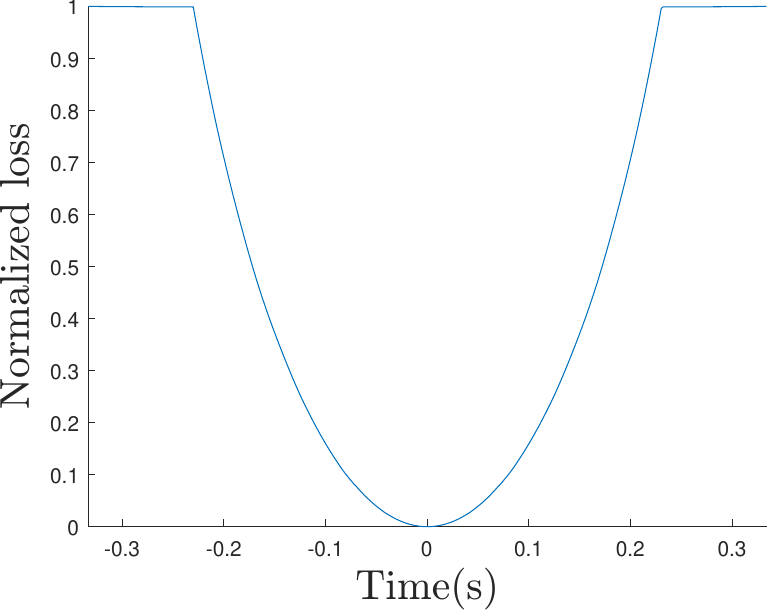}\label{fig:convex4}}
    \caption{The misfit between $f(x)$ and $g_s : = f(x-s)$ using (a) the $L^2$ norm, (b) the HV metric with $\kappa=\lambda=10^{5}$ and $\varepsilon=10$, (c) the HV metric with $\kappa=\lambda=10^{-5}$ and $\varepsilon=10^{-5}$, and (d) the HV metric with $\kappa=\varepsilon=10^{-5}$ and $\lambda=10$.}
    \label{convexpara}
\end{figure}

\section{Application of HV to Seismic Inversion}\label{numex}
In this section, we test the HV metric as a data misfit function to measure the discrepancy between synthetic wave data and observed wave data in time-domain FWI, where the goal is to recover the wave speed from boundary measurements of wave displacement. We focus on two standard benchmark velocity models: the Marmousi model and the BP salt model. The performance of the HV metric is compared against the $L^2$ norm and the 2-Wasserstein metric.
\subsection{Marmousi Model}
 
We invert the full Marmousi model, a standard benchmark for exploration geophysics since the 1980s. The model is based on a profile of the North Quenguela Trough in the Cuanza Basin in Angola \cite{versteeg1994marmousi}. The HV metric used in this inverse problem is computed with hyperparameters $\kappa=\lambda=10^{-4},\varepsilon=10^{-12}$ and the number of iterations is $15$. The $v$ component in the computation of the HV metric is initialized by matching anywhere from zero to six peaks; we refer to~\cite[Sec.~4.3]{han2024hv} for more details on the initialization. 

Fig.~\ref{fig:Marm1} represents the P-wave velocity of the true Marmousi model. Our inverse problem begins with an initial model obtained by smoothing the true velocity using a Gaussian filter with a variance of 30, as shown in Fig.~\ref{fig:Marm2}. We place 15 evenly spaced sources at a depth of 50 m and 307 receivers at the same depth. Discretization of the forward wave equation is 10 m in the $x$ and $z$ directions with 1 ms in time. The source is generated by a Ricker wavelet with a peak frequency of 15 Hz, and the acquisition time is 4.5 seconds. As the optimization algorithm, we use a quasi-Newton method called L-BFGS-B~\cite{LBFGS} for which we keep track of ten iterates in the memory for this optimization scheme. Inversions are terminated after 400 iterations. The final inversion result is presented in Fig.~\ref{fig:Marm3}.

\begin{figure}[!ht]
    \centering
    \subfloat[]{\includegraphics[width=0.3\linewidth]{  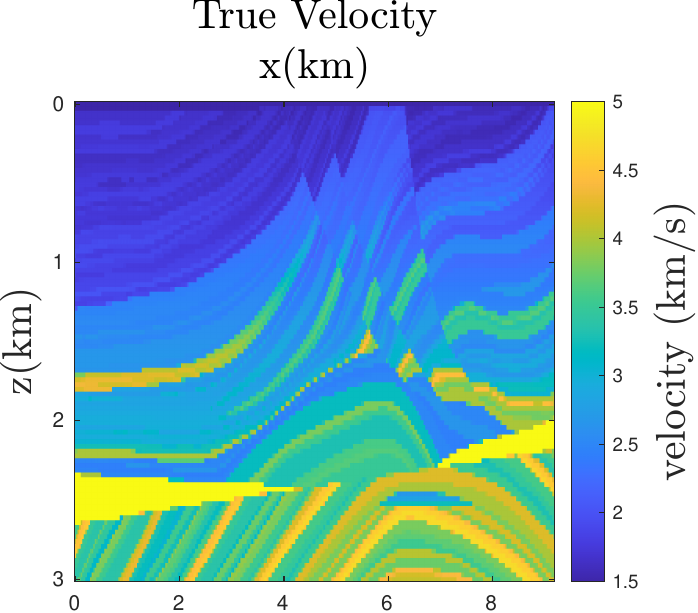}\label{fig:Marm1}} \hspace{0.05\linewidth}
    \subfloat[]{\includegraphics[width=0.3\linewidth]{  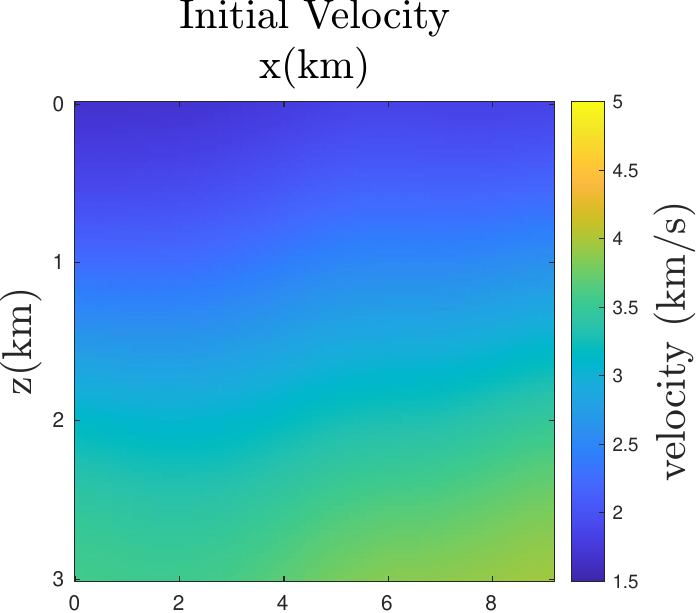}\label{fig:Marm2} }\hspace{0.05\linewidth}
    \subfloat[]{\includegraphics[width=0.30\linewidth]{  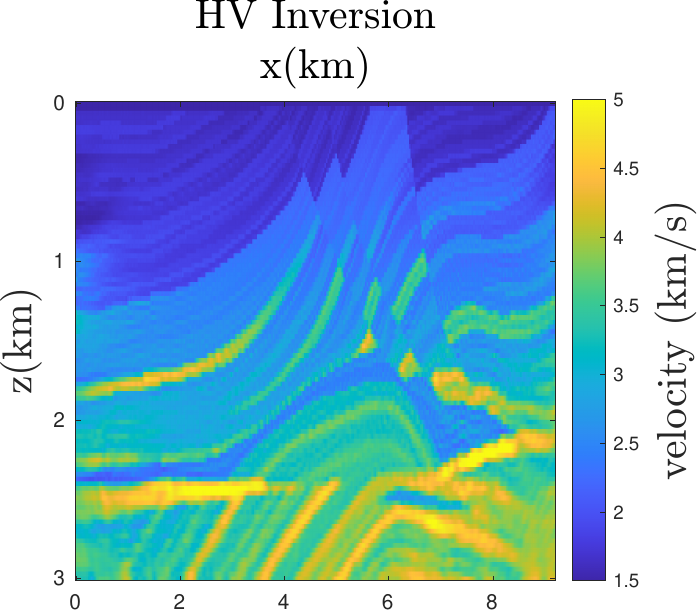}\label{fig:Marm3}} \hspace{0.025\linewidth}
    \subfloat[]{\includegraphics[width=0.30\linewidth]{  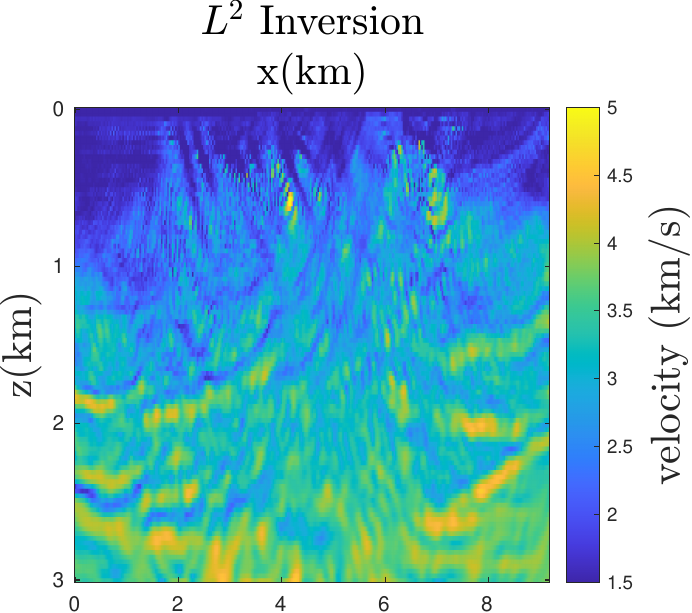}\label{fig:Marm5}}  \hspace{0.025\linewidth}
    \subfloat[]{\includegraphics[width=0.30\linewidth]{  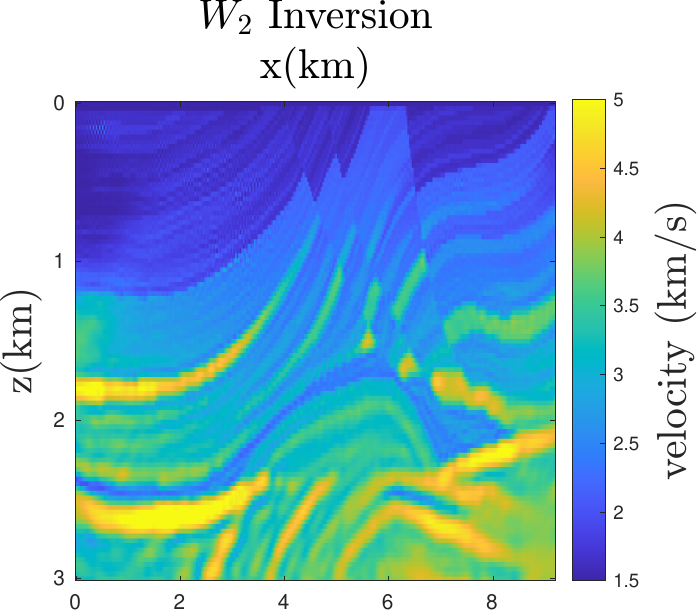}\label{fig:Marm7}} 
    \caption{Marmousi model inversion: (a) true velocity; (b) initial velocity; (c) HV inversion result; (d) $L^2$ inversion result; (e) $W_2$ inversion result}
    \label{MarmComp1}
\end{figure}

We compare the inversion results using the HV metric as the objective function against those using the $L^2$ norm and the $2$-Wasserstein ($W_2$) distance to measure the data misfit. Fig.~\ref{fig:Marm5} shows the inversion result using the traditional $L^2$ norm, while Fig.~\ref{fig:Marm7} shows the inversion result for the Wasserstein distance. We observe that the result of the $L^2$ norm exhibits spurious high-frequency artifacts, whereas the HV and $W_2$ methods correctly invert most of the details in the true model. When comparing the inversion results between the HV metric and the Wasserstein distance, we notice that the HV metric recovers finer details and avoids inconsistencies such as the banana-shaped feature seen in Fig.~\ref{fig:Marm7}, which occurs due to the data normalization step that turns the signed wave data into probability measures required to compute the Wasserstein distance.

 \subsection{Salt Model}
 In this section, we invert a different kind of velocity model, primarily using reflection waves. Fig.~\ref{fig:Salt1} represents the true velocity model, which is part of the 2004 BP benchmark \cite{salt2004}. In contrast to the Marmousi model explored in the previous subsection, the main challenge here is obtaining the delineation of the salt and recovering information on the sub-salt velocity variations. The HV metric in the inversion problem is computed using the hyperparameters $\kappa=\lambda=10^{-8},\varepsilon=10^{-4}$ with the number of iterations equal to $10$. The $v$ component in the computation of the HV metric is initialized by matching between 0 and 4 peaks. 
 
 The inversion begins with an initial model obtained by smoothing the true velocity model using a Gaussian filter, as shown in Fig.~\ref{fig:Salt2}. We place 11 equally spaced point sources at the top boundary of the domain, each producing a 15 Hz Ricker wavelet, and 375 receivers are equally spaced on the top of the domain. The total recording time is 4 seconds. The observed data are dominated by the reflection from the top of the salt inclusion, as also evident in the inversion results in Figs.~\ref{fig:Salt3} and \ref{fig:Salt4}. We observe that in the $L^2$-based reconstruction after 100 iterations (see Fig.~\ref{fig:Salt4}),  a wrong sub-layer was created below the salt-water interface, from which one might conclude a misleading interpretation of the subsurface medium property. On the other hand, the sub-layer in the HV inversion (see Fig.~\ref{fig:Salt3}) after 100 iterations is almost non-existent, which more closely resembles the true velocity model. 
 
\begin{figure}[!ht]
    \centering
    \subfloat{\includegraphics[width=0.40\linewidth]{  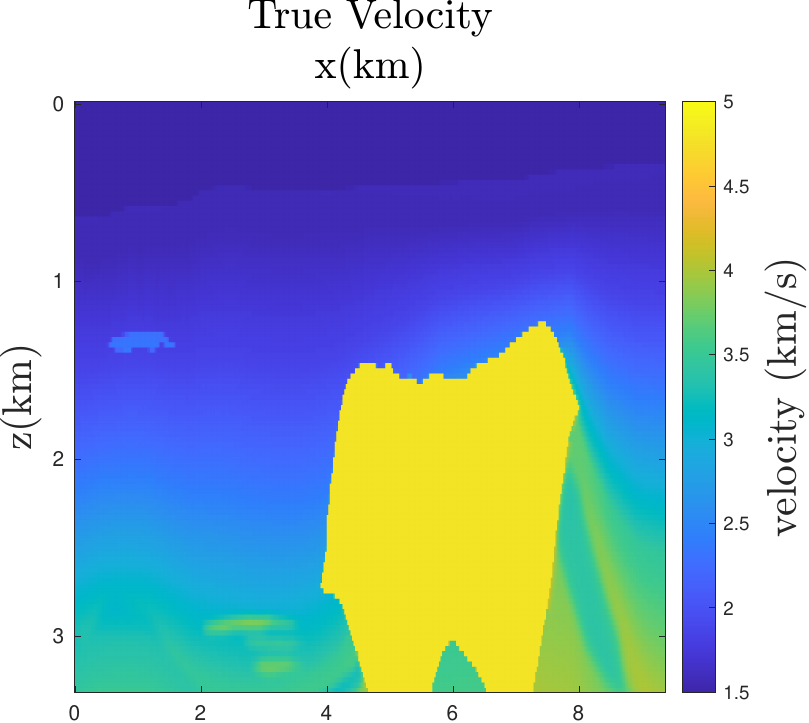}\label{fig:Salt1}}
    \hspace{0.05\linewidth}
    \subfloat[]{\includegraphics[width=0.40\linewidth]{  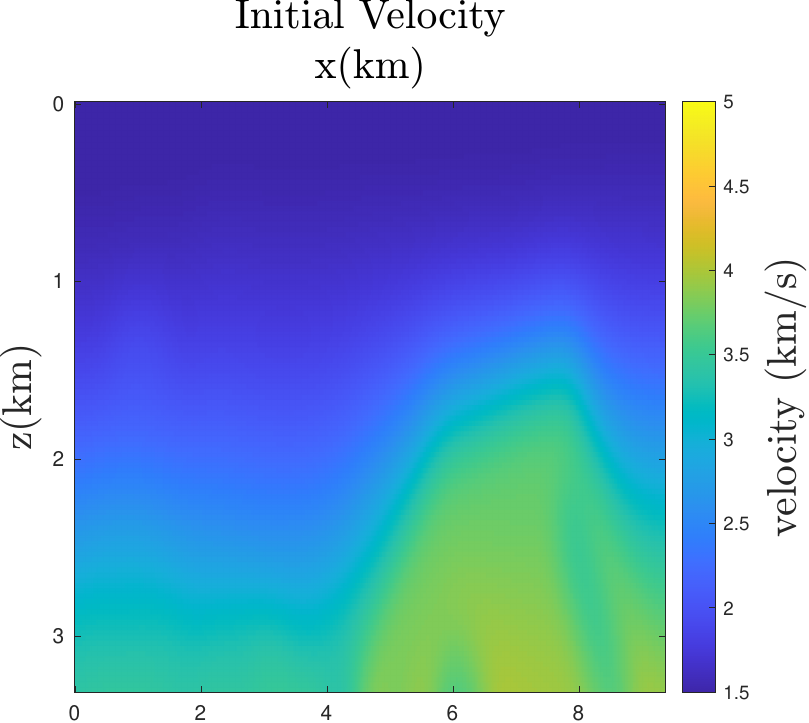}\label{fig:Salt2}}
    \label{fig:time}
    \subfloat[]{\includegraphics[width=0.40\linewidth]{  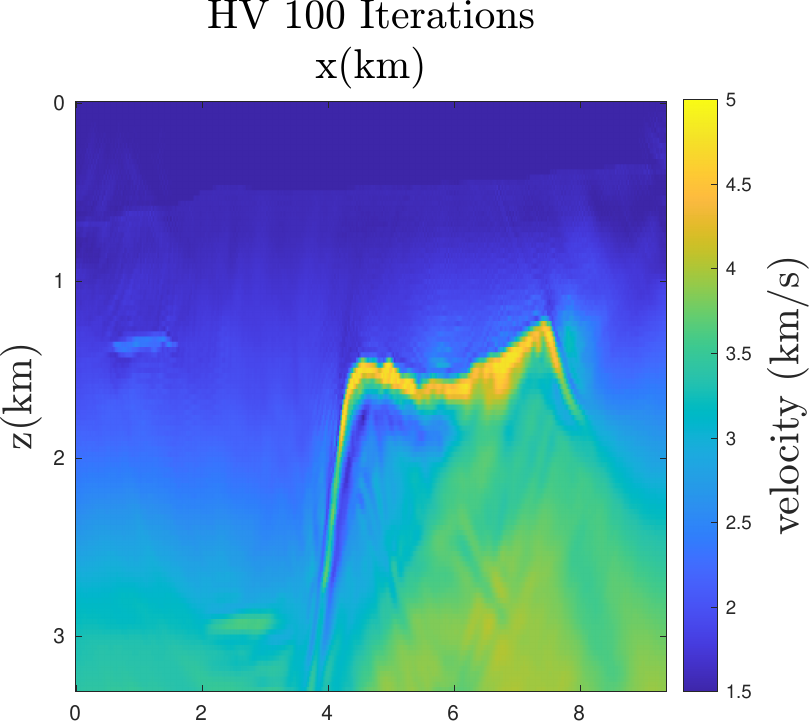}\label{fig:Salt3}}
    \hspace{0.05\linewidth}
    \subfloat[]{\includegraphics[width=0.40\linewidth]{  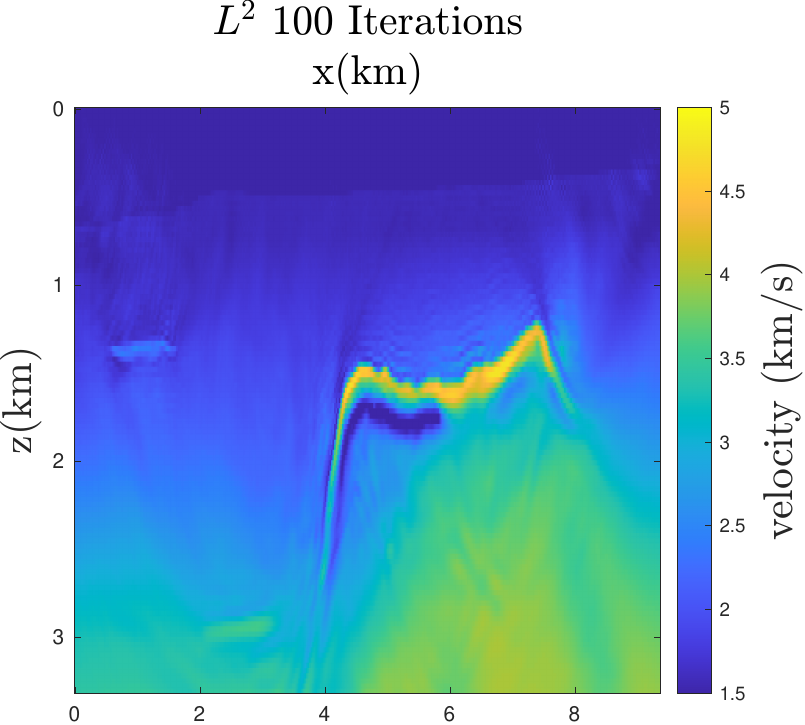}\label{fig:Salt4}}    \hspace{0.05\linewidth}
     \subfloat[]{\includegraphics[width=0.40\linewidth]{  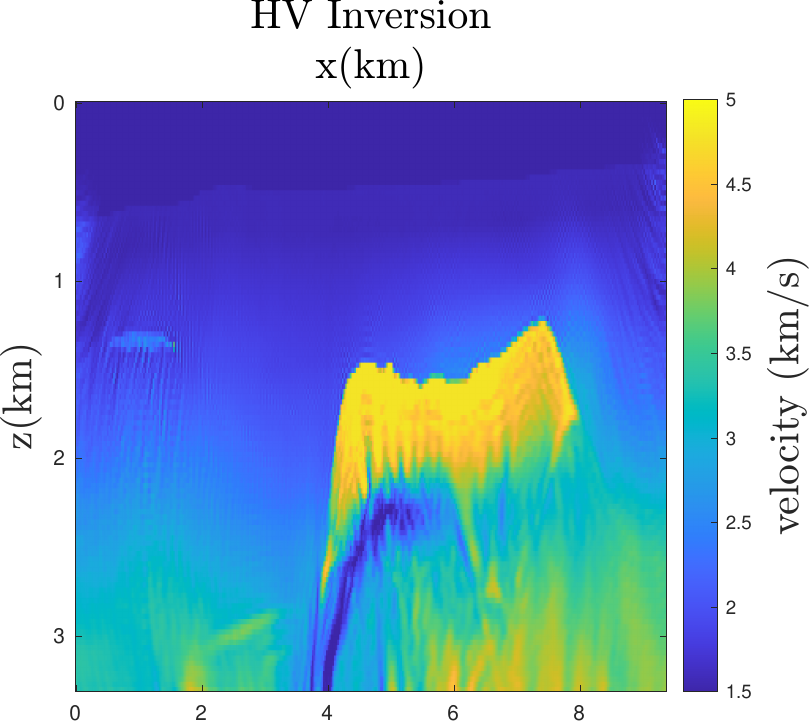}\label{fig:Salt5}}
    \hspace{0.05\linewidth}
    \subfloat[]{\includegraphics[width=0.40\linewidth]{  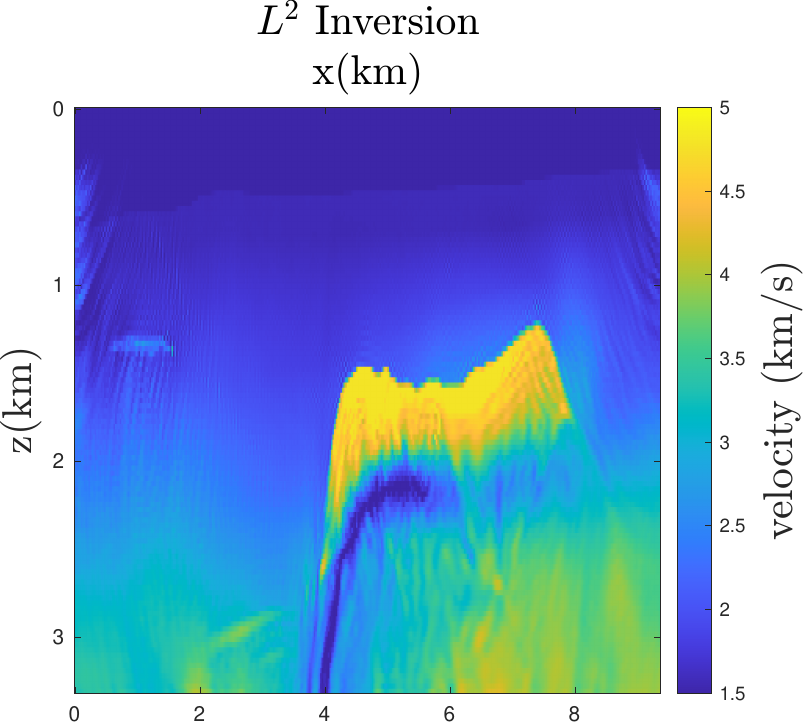}\label{fig:Salt6}}
    \caption{BP model inversion: (a) true velocity; (b) initial velocity; (c) HV result after 100 iterations of L-BFGS-B; (d) $L^2$ result after 100 iterations of L-BFGS-B; (e) HV inversion after 800 iteration; (f) $L^2$ inversion after 800 iterations}
\end{figure}

As the number of iterations increases, the differences between the reconstructions become more pronounced. Fig.~\ref{fig:Salt5} illustrates the HV-based reconstruction after $800$ iterations, which achieves better recovery of the salt body compared to the $L^2$ inversion after the same number of iterations (see Fig.~\ref{fig:Salt6}). Overall, our numerical observation shows that the inversion using the HV metric consistently recovers more of the salt model while producing fewer and less prominent sub-layers that the $L^2$-based approach.

\section{Conclusion}\label{conclusion}

This study establishes the HV metric as a powerful misfit function for full-waveform inversion, introducing the first transport-based distance capable of comparing \emph{signed} waveforms without requiring preprocessing to transform them into probability measures. We derived a closed-form expression for the Fr\'echet derivative of the map $f \mapsto d_\text{HV}^2(f, g)$, enabling seamless integration of the HV metric into adjoint-state methods tailored for any large-scale PDE-constrained optimization tasks, including the FWI workflows. 

A spectral analysis of the Hessian operator for the HV objective functional reveals that by tuning parameters $\kappa$, $\lambda$, and $\varepsilon$, the objective functional smoothly interpolates between the $L^2$, $H^{-1}$, and $H^{-2}$ regimes. This flexibility allows the HV metric to strike a balance between local, pointwise matching and global, transport-driven alignment. Remarkably, the HV metric can be \emph{weaker} than the Wasserstein distance, whose linearization is related to the weighted $\dot{H}^{-1}$ semi-norm. Under the appropriate asymptotic conditions, the HV metric approximates the $H^{-2}$ norm, providing additional convexification when required, thereby improving its robustness and applicability in challenging inversion tasks.

Our numerical experiments confirm these theoretical insights, showing that the HV misfit mitigates cycle-skipping, accelerates convergence, and is more robust to poor initial models than inversions driven by least-squares or Wasserstein objectives. Importantly, the HV metric acts directly on signed waveforms, eliminating the preprocessing required by optimal-transport approaches and thereby preserving critical amplitude and polarity information of the waveform data.

These findings open several avenues for future work. First, automatic strategies for selecting or adapting the hyperparameters could further boost performance and eliminate the need for manual tuning. Second, a systematic study of the metric's sensitivity to measurement noise would clarify its applicability to practical inverse problem setups. Finally, the HV metric's utility may extend well beyond waveform inversion. For instance, tasks such as ECG signal registration and classification would greatly benefit from transport-based comparisons of signed, oscillatory data.

\section*{Statements and Declarations}
\subsection*{Acknowledgements} M.N.\ and Y.Y.\ acknowledge support from the National Science Foundation under grant DMS-2409855 and from the Office of Naval Research under grant N00014-24-1-2088. M.N.\ is additionally grateful to Dr.~John Guckenheimer for generously providing financial assistance during a temporary funding disruption.  
The authors thank Prof.~Dejan Slep\v{c}ev for valuable discussions.
\subsection*{Conflict of interest}The authors have no relevant financial or non-financial interests to disclose.

\bibliography{sn-bibliography}
\end{document}